\newtheorem{theorem}{Theorem}[section]
\newtheorem{proposition}[theorem]{Proposition}
\newtheorem{corollary}[theorem]{Corollary}
\newtheorem{lemma}[theorem]{Lemma}
\newtheorem{remark}[theorem]{Remark}
\newtheorem{assumption}[theorem]{Assumption}
\newenvironment{proof}{\smallskip\par{\sc Proof.}\enspace}%
 {{\unskip\nobreak\hfil\penalty50\hskip2em
          \hbox{}\nobreak\hfil{\rule[-1pt]{5pt}{10pt}}
          \parfillskip=0pt\finalhyphendemerits=0
          \par\medskip}} 
\def\section{\@startsection {section}{1}{\z@}{3.25ex plus 1ex minus
 .2ex}{1.5ex plus .2ex}{\large\bf}}
\def\subsection{\@startsection{subsection}{2}{\z@}{3.25ex plus 1ex minus
 .2ex}{1.5ex plus .2ex}{\normalsize\bf}}
\title{ Rate of convergence for Wong-Zakai-type approximations of It\^o stochastic
differential equations}
\author{Bilel Kacem Ben Ammou\footnote{Department of Mathematics, Universtiy of Tunis - El Manar, Street Mohamed Alaya Kacem,  Nabeul - Tunisia. E-mail: \emph{bilelbenammou@gmail.com}}\quad\quad Alberto Lanconelli\footnote{Dipartimento di Matematica, Universit\'a degli Studi di Bari Aldo Moro, Via E. Orabona 4, 70125 Bari - Italia. E-mail: \emph{alberto.lanconelli@uniba.it}}}
\date{\empty}
\begin{document}

\maketitle

\numberwithin{equation}{section}

\bigskip

\begin{abstract}
We consider a class of stochastic differential equations driven by a one dimensional Brownian motion and we investigate the rate of convergence for Wong-Zakai-type approximated solutions. We first consider the Stratonovich case, obtained through the point-wise multiplication between the diffusion coefficient and a smoothed version of the noise; then, we consider It\^o equations where the diffusion coefficient is Wick-multiplied by the regularized noise. We discover that in both cases the speed of convergence to the exact solution coincides with the speed of convergence of the smoothed noise towards the original Brownian motion. We also prove, in analogy with a well known property for exact solutions, that the solutions of approximated It\^o equations solve approximated Stratonovich equations with a certain correction term in the drift.
\end{abstract}

Key words and phrases:  stochastic differential equations, Wong-Zakai theorem, Wick product \\

AMS 2000 classification: 60H10; 60H30; 60H05

\allowdisplaybreaks

\section{Introduction and statement of the main results}

From a modeling point of view, the celebrated Wong-Zakai theorem \cite{WZ},\cite{WZ2} provides a crucial insight in the theory of stochastic differential equations. It asserts that the solution $\{X_t^{(n)}\}_{t\in [0,T]}$ of the random ordinary differential equation
\begin{eqnarray}\label{stra approx intro}
\frac{dX_t^{(n)}}{dt}=b(t,X_t^{(n)})+\sigma(t,X_t^{(n)})\cdot\frac{dB_t^{(n)}}{dt},
\end{eqnarray}
where $\{B_t^{(n)}\}_{t\in [0,T]}$ is a suitable smooth approximation of the Brownian motion $\{B_t\}_{t\in [0,T]}$, converges in the mean, as $n$ goes to infinity, to the solution of the Stratonovich stochastic differential equation (SDE, for short)
\begin{eqnarray}\label{stra intro}
dX_t=b(t,X_t)dt+\sigma(t,X_t)\circ dB_t.
\end{eqnarray}
At a first sight, it may look a bit surprising the fact that the sequence $\{X_t^{(n)}\}_{t\in [0,T]}$ does not converge to the It\^o's interpretation of the corresponding stochastic equation, i.e.
\begin{eqnarray}\label{ito intro}
dX_t=b(t,X_t)dt+\sigma(t,X_t)dB_t.
\end{eqnarray}
What makes the sequence $\{X_t^{(n)}\}_{t\in [0,T]}$ prefer to converge to the Stratonovich SDE (\ref{stra intro}) instead of the It\^o SDE (\ref{ito intro}) is the presence of the point-wise product $\cdot$ appearing in (\ref{stra approx intro}) between the diffusion coefficient $\sigma$ and the smoothed noise. In fact, Hu and {\O}ksendal \cite{HO} proved, when the diffusion coefficient is linear, that the solution of
\begin{eqnarray}\label{ito approx intro}
\frac{dY_t^{(n)}}{dt}=b(t,Y_t^{(n)})+\sigma(t)Y_t^{(n)}\diamond\frac{dB_t^{(n)}}{dt},
\end{eqnarray}
where $\diamond$ stands for the Wick product, converges as $n$ goes to infinity to the solution of the It\^o SDE
\begin{eqnarray}\label{ito oksendal intro}
dY_t=b(t,Y_t)dt+\sigma(t)Y_tdB_t.
\end{eqnarray}
Along this direction, Da Pelo et al. \cite{DLS 2013} introduced a family of products interpolating  between the point-wise and Wick products and proved convergence for Wong-Zakai-type approximations toward stochastic differential equations where the stochastic integrals are defined via suitable evaluation points in the Riemann sums.\\
Approximation procedures based on Wong-Zakai-type theorems have attracted the attention of several authors. First of all, Stroock and Varadhan \cite{Stroock Varadhan} proved the multidimensional version of the Wong-Zakai theorem. Then, generalizations to SDEs driven by different type of noises and to stochastic partial differential equations have been the most investigated directions. For instance, Konecny \cite{Konecny} proved a Wong-Zakai-type theorem for one-dimensional SDEs driven by a semimartingale, Gyo\"ngy and G. Michaletzky \cite{Gyongy Michaletzky} considered $\delta$-martingales while Naganuma \cite{Naganuma} examined the case of Gaussian rough paths. In the theory of stochastic partial differential equations, Hairer and Pardoux \cite{Hairer Pardoux} proved a version of the Wong-Zakai theorem for one-dimensional parabolic nonlinear stochastic PDEs driven by space-time white noise utilizing the recent theory of regularity structures; Brezniak and Flandoli \cite{Brezniak Flandoli} proved almost sure convergence to the solution to a Stratonovich stochastic partial differential equation; Tessitore and Zabczyk \cite{Tessitore Zabczyk} obtained results on the weak convergence of the laws of the Wong-Zakai approximations for stochastic evolution equations. We also mention that Londono and Villegas \cite{Londono Villegas} proposed to use a Wong-Zakai type approximation method for the numerical evaluation of the solutions of SDEs.\\
The aim of the present paper is to compare the rate of convergence for approximations of Stratonovich and It\^o quasi-linear SDEs and to investigate whether the connection between exact solutions of the two different interpretations can be restored for the corresponding approximating sequences (see the discussion after Corollary \ref{corollary} below).

We remark that the rate of convergence for Wong-Zakai approximations, in the Stratonovich case, has been already investigated by other authors. We recall Hu and Nualart \cite{HN} dealing with almost sure convergence in H\"older norms; Hu, Kallianpur and Xiong \cite{HKX} studying approximations for the Zakai equation and Gyongy and Shmatkov \cite{Gyongy Shmatkov} and Gyongy and Stinga \cite{Gyongy Stinga} treating general linear stochastic partial differential equations. We also refer the reader to the book by Hu \cite{H} where Wong-Zakai approximations are considered in the framework of Euler-Maruyama discretization schemes.We will discuss in Remark \ref{Hu} below the details of the comparison between our convergence rate for Stratonovich equations and the one in \cite{H}.

While Wong-Zakai-type theorems for Stratonovich SDEs have been largely investigated, approximations for It\^o SDEs are very rare in the literature. In fact, as the paper by Hu and {\O}ksendal shows, to recover the It\^o interpretation of the SDE one has to deal with the Wick product and in most cases this multiplication is not easy to handle. This is the reason why we focus on equations with linear diffusion coefficient (it is in fact not known whether the fully non linear version of (\ref{ito approx intro}) admits a solution \cite{HO}). Nevertheless, to find the speed of convergence of the approximation to the solution of the It\^o equation, we had to utilize some tools from the Malliavin calculus (see Lemma \ref{Malliavin} below). The present paper can be considered as a continuation of the work presented in Da Pelo et al. \cite{DLS 2013}, where the issue of the rate of convergence has not been studied.\\
To state our main results we briefly describe our framework. Let $(W,\mathcal{A},\mu)$ be the classical Wiener space over the time interval $[0,T]$, where $T$ is an arbitrary positive constant, and denote by $\{B_t\}_{t\in [0,T]}$ the coordinate process, i.e.
\begin{eqnarray*}
B_t:W&\to&\mathbb{R}\\
\omega&\mapsto& B_t(\omega)=\omega(t).
\end{eqnarray*}
By construction, the process $\{B_t\}_{t\in [0,T]}$ is, under the measure $\mu$, a one dimensional Brownian motion. We now introduce a smooth (continuously differentiable) approximation of $\{B_t\}_{t\in [0,T]}$ by means of a kernel satisfying certain technical assumptions. In the sequel, the symbol $|f|$ will denote the norm of $f\in L^2([0,T])$ while $\Vert X\Vert_p$ will denote the norm of $X\in\mathcal{L}^p(W,\mu)$ for any $p\geq 1$.
\begin{assumption}\label{assumption on K}
For any $\varepsilon>0$ let $K_{\varepsilon}:[0,T]^2\to\mathbb{R}$ be such that
\begin{itemize}
\item the function $t\mapsto K_{\varepsilon}(t,s)$ belongs to $C^1([0,T])$ for almost all $s\in [0,T]$;
\item the functions $s\mapsto K_{\varepsilon}(t,s)$ and $s\mapsto \partial_tK_{\varepsilon}(t,s)$ belong to $L^2([0,T])$ for all $t\in [0,T]$.
\end{itemize}
Moreover, we assume that
\begin{eqnarray}\label{kernel}
\lim_{\varepsilon\to 0^+}\sup_{t\in[0,T]}|K_{\varepsilon}(t,\cdot)-1_{[0,t]}(\cdot)|=0
\end{eqnarray}
and
\begin{eqnarray*}
M:=\sup_{\varepsilon >0}\sup_{t\in [0,T]}|K_{\varepsilon}(t,\cdot)|<+\infty.
\end{eqnarray*}
\end{assumption}
Now, if we let
\begin{eqnarray*}
B^{\varepsilon}_t:=\int_0^TK_{\varepsilon}(t,s)dB_s,\quad t\in [0,T],
\end{eqnarray*}
and recall that $B_t=\int_0^T1_{[0,t]}(s)dB_s$, then Assumption \ref{assumption on K} implies that $\{B^{\varepsilon}_t\}_{t\in [0,T]}$ is a continuosly differentiable Gaussian process and that
$B_t^{\varepsilon}$ converges to $B_t$ in $\mathcal{L}^2(W,\mu)$ uniformly with respect to $t\in [0,T]$. In fact, condition (\ref{kernel}) is equivalent to
\begin{eqnarray*}
\lim_{\varepsilon\to 0^+}\sup_{t\in [0,T]}\Vert B_t^{\varepsilon}-B_t\Vert_2=0.
\end{eqnarray*}
Therefore, we deal with a quite general class of smooth approximations of the Brownian motion $\{B_t\}_{t\geq 0}$. In the sequel we will be studying SDEs of the type (\ref{ito oksendal intro}) both in the Stratonovich and It\^o senses. We now state the assumptions on the coefficients $b$ and $\sigma$ which are supposed to be valid for the rest of the present paper.
\begin{assumption}\label{assumption on b and sigma}
There exist two positive constants $C_1$ and $C_2$ such that for all $t\in [0,T]$ and $x,y\in\mathbb{R}$ one has
\begin{eqnarray}\label{lipschitz}
|b(t,x)-b(t,y)|\leq C_1|x-y|\quad\mbox{ and }\quad |b(t,x)|\leq C_2(1+|x|).
\end{eqnarray}
Moreover, the function $\sigma$ belongs to $\mathcal{L}^{\infty}([0,T])$.
\end{assumption}
For $f\in L^2([0,T])$ we denote
\begin{eqnarray*}
\mathcal{E}(f):=\exp\left\{\int_0^Tf(s)dB_s-\frac{1}{2}\int_0^Tf^2(s)ds\right\}
\end{eqnarray*}
and we call it \emph{stochastic exponential}. The set $\{\mathcal{E}(f),f\in L^2([0,T])\}$ turns out to be total in $\mathcal{L}^p(W,\mu)$ for any $p\geq 1$. Given $f,g\in L^2([0,T])$, the \emph{Wick product} of $\mathcal{E}(f)$ and $\mathcal{E}(g)$ is defined to be
\begin{eqnarray*}
\mathcal{E}(f)\diamond\mathcal{E}(g):=\mathcal{E}(f+g).
\end{eqnarray*}
This multiplication can be extended by linearity and density to an unbounded bilinear form on a proper subset of $\mathcal{L}^p(W,\mu)\times\mathcal{L}^p(W,\mu)$ (see Holden et al. \cite{HOUZ} and Janson \cite{J} for its connection to It\^o-Skorohod integration theory). For $g\in L^2([0,T])$ we also define the \emph{translation operator} $T_g$ as the operator that shifts the Brownian path by the function $\int_0^{\cdot}g(s)ds$; more precisely, the action of $T_g$ on stochastic exponentials is given by
\begin{eqnarray*}
T_g\mathcal{E}(f):=\mathcal{E}(f)\cdot\exp\{\langle f,g\rangle\}.
\end{eqnarray*}
where $\langle\cdot,\cdot\rangle$ denotes the inner product in $L^2([0,T])$ (see Holden et al. \cite{HOUZ} and Janson \cite{J} for details).\\
\noindent We are now ready to state the first two main theorems of the present paper. The proofs are postponed to Section 2 and Section 3, respectively.

\begin{theorem}\label{main theorem 1}
Let $\{X_t\}_{t\in [0,T]}$ be the unique solution of the Stratonovich SDE
\begin{eqnarray}\label{stra SDE}
dX_t=b(t,X_t)dt+\sigma(t)X_t\circ dB_t,\quad t\in ]0,T]\quad\quad X_0=x
\end{eqnarray}
and for any $\varepsilon>0$ let $\{X_t^{\varepsilon}\}_{t\in[0,T]}$ be the unique solution of
\begin{eqnarray}\label{approx stra}
\frac{dX_t^{\varepsilon}}{dt}=b(t,X_t^{\varepsilon})+\sigma(t)X_t^{\varepsilon}\cdot\frac{dB_t^{\varepsilon}}{dt},\quad
X_0^{\varepsilon}=x.
\end{eqnarray}
Then, for any $p\geq 1$ there exists a positive constant $C$ (depending on $p$, $|x|$, $T$, $C_1$, $C_2$ and $M$) such that for any $q$ greater than $p$
\begin{eqnarray}\label{SDE1}
\sup_{t\in [0,T]}\Vert X_t^{\varepsilon}-X_t\Vert_{p}\leq C\cdot\mathcal{S}_q\left(\sup_{t\in [0,T]}|K_{\varepsilon}(t,\cdot)-1_{[0,t]}(\cdot)|\right)
\end{eqnarray}
where
\begin{eqnarray}\label{def S}
\mathcal{S}_q(\lambda):=\lambda\exp\left\{q\lambda^2\right\}+\exp\{\lambda^2/2\}-1,\quad \lambda\in\mathbb{R}
\end{eqnarray}
\end{theorem}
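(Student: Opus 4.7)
The plan is to exploit the variation-of-constants representation that the linearity of the multiplicative noise in both (\ref{stra SDE}) and (\ref{approx stra}) makes available, and then to reduce everything to a single Gaussian moment estimate. Set $U_t := \exp(\int_0^t\sigma(s)\,dB_s)$ (an It\^o integral) and $U_t^\varepsilon := \exp(\int_0^t\sigma(s)\dot B_s^\varepsilon\,ds)$ (a pathwise Riemann integral, since $B^\varepsilon\in C^1$). A direct application of It\^o's formula shows $dU_t=\sigma(t)U_t\circ dB_t$, while $\dot U_t^\varepsilon = \sigma(t)U_t^\varepsilon\dot B_t^\varepsilon$ by the chain rule. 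Looking for $X_t=U_tV_t$ and $X_t^\varepsilon=U_t^\varepsilon V_t^\varepsilon$ with finite-variation factors $V_t,V_t^\varepsilon$ (via the Stratonovich, resp.\ classical, product rule) one obtains the variation-of-constants identities
\[
X_t = x\,U_t + \int_0^t\frac{U_t}{U_s}\,b(s,X_s)\,ds,\qquad X_t^\varepsilon = x\,U_t^\varepsilon + \int_0^t\frac{U_t^\varepsilon}{U_s^\varepsilon}\,b(s,X_s^\varepsilon)\,ds.
\]

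\smallskip
Subtracting, inserting $\pm b(s,X_s)$ and applying the Lipschitz bound on $b$ with H\"older's inequality (splitting $L^p = L^{p_1}\cdot L^{p_2}$, $1/p_1+1/p_2=1/p$, with $p_1$ slightly larger than $p$) yields an estimate of the form
\[
\|X_t^\varepsilon - X_t\|_p \leq |x|\,\|U_t^\varepsilon - U_t\|_p + C\!\int_0^t\!\bigl(\|U_t^\varepsilon/U_s^\varepsilon\|_{p_1}\|X_s^\varepsilon - X_s\|_{p_2} + \|U_t^\varepsilon/U_s^\varepsilon - U_t/U_s\|_{p_1}(1+\|X_s\|_{p_2})\bigr)\,ds.
\]
The moments of the ratios $U_t^\varepsilon/U_s^\varepsilon$, $U_t/U_s$ and of $X_s$ are bounded uniformly in $\varepsilon$, since $A_t:=\int_0^t\sigma\,dB$ and $A_t^\varepsilon:=\int_0^t\sigma\dot B^\varepsilon\,ds$ are centered Gaussians whose variances are uniformly controlled by $\|\sigma\|_\infty$ and by the bound $M$ of Assumption~\ref{assumption on K}. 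A Gronwall argument therefore reduces the theorem to the single $L^r$ bound
\[
\sup_{0\leq s\leq t\leq T}\|U_t^\varepsilon/U_s^\varepsilon - U_t/U_s\|_r \leq C\cdot\mathcal{S}_{q'}(\lambda),\qquad \lambda := \sup_{t\in[0,T]}|K_\varepsilon(t,\cdot)-1_{[0,t]}(\cdot)|,
\]
valid for every $r\geq 1$ and a suitable $q'$ (the relation $q>p$ in the statement is produced by these H\"older steps).

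\smallskip
The heart of the proof is therefore the estimate $\|U_t^\varepsilon-U_t\|_r \leq C\cdot\mathcal{S}_{q'}(\lambda)$ (the argument for the ratio is identical on the subinterval $[s,t]$). Write $U_t^\varepsilon - U_t = U_t\bigl(e^{\Delta_t^\varepsilon}-1\bigr)$ with $\Delta_t^\varepsilon := A_t^\varepsilon - A_t$, a centered Gaussian that stochastic Fubini represents as $\Delta_t^\varepsilon = \int_0^T h_\varepsilon^t(u)\,dB_u$ for a deterministic $h_\varepsilon^t\in L^2([0,T])$ with $|h_\varepsilon^t|^2 = \mathrm{Var}(\Delta_t^\varepsilon)$. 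Using the stochastic-exponential identity $e^{\Delta_t^\varepsilon}=e^{|h_\varepsilon^t|^2/2}\mathcal{E}(h_\varepsilon^t)$ one splits
\[
e^{\Delta_t^\varepsilon}-1 \;=\; e^{|h_\varepsilon^t|^2/2}\bigl(\mathcal{E}(h_\varepsilon^t)-1\bigr) + \bigl(e^{|h_\varepsilon^t|^2/2}-1\bigr).
\]
The deterministic second summand accounts for the $e^{\lambda^2/2}-1$ piece of $\mathcal{S}_{q'}$ (once $|h_\varepsilon^t|\leq C\lambda$ is established, see below), while the first is controlled via the martingale representation $\mathcal{E}(h)-1=\int_0^T h(s)\mathcal{E}(h\cdot 1_{[0,s]})\,dB_s$, the Burkholder-Davis-Gundy inequality, and the explicit formula $\|\mathcal{E}(f)\|_r = \exp(\tfrac{r-1}{2}|f|^2)$, which together give $\|\mathcal{E}(h_\varepsilon^t)-1\|_r \leq C_r|h_\varepsilon^t|\exp(c_r|h_\varepsilon^t|^2)$. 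A final H\"older step peeling off $U_t$ (whose exponential moments are uniformly bounded since $\sigma\in L^\infty$) then delivers the $\lambda\exp(q'\lambda^2)$ contribution.

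\smallskip
The main obstacle is the last bookkeeping step, $|h_\varepsilon^t|_{L^2}\leq C\lambda$, which replaces the abstract variance of $\Delta_t^\varepsilon$ by the concrete $L^2$-distance appearing in (\ref{SDE1}). Stochastic Fubini gives the explicit form $h_\varepsilon^t(u) = \int_0^t\sigma(s)\partial_sK_\varepsilon(s,u)\,ds - \sigma(u)1_{[0,t]}(u)$; testing against $\varphi\in L^2([0,T])$, setting $\Phi_\varepsilon(s):=\int_0^T K_\varepsilon(s,u)\varphi(u)\,du$ and $\Phi(s):=\int_0^s\varphi(u)\,du$ so that $\sup_s|\Phi_\varepsilon(s)-\Phi(s)|\leq\lambda|\varphi|$, and carrying out an integration by parts in $s$ to transfer $\partial_s$ off of $\Phi_\varepsilon-\Phi$ and onto $\sigma$ produces $|\langle h_\varepsilon^t,\varphi\rangle|\leq C\lambda|\varphi|$, hence the claim by duality. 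Making this integration by parts rigorous under the sole hypothesis $\sigma\in L^\infty$ --- either through a distributional formulation or by an approximation/truncation procedure --- is the most delicate technical point of the whole argument; once it is in place, the pieces fit together exactly as described.
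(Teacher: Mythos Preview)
Your overall strategy --- factor out the multiplicative noise via $X_t=U_tV_t$, control the distance between the resulting exponential factors, and close with Gronwall --- is the paper's strategy as well. Two differences in execution are worth noting. For the key bound $\|e^{\delta f}-e^{\delta g}\|_r\le C\,\mathcal{S}_r(|f-g|)$ the paper uses the Gaussian Poincar\'e inequality applied to $e^{\delta h}$ (its Proposition~\ref{distance exponentials}), whereas you reach the equivalent estimate on $\mathcal{E}(h)-1$ through its martingale representation and BDG; both routes work and give the same function $\mathcal{S}_q$. And the paper, like you, writes out only the case $\sigma\equiv 1$, declaring the general case a ``straightforward modification'', so the integration-by-parts difficulty you honestly flag for $\sigma$ merely in $L^\infty$ is simply not confronted there either; your concern is legitimate.

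There is, however, a genuine gap in the way you organize the Gronwall step. After the H\"older split $L^p=L^{p_1}\cdot L^{p_2}$ with $p_2>p$, your inequality bounds $\|X_t^\varepsilon-X_t\|_p$ by $\int_0^t\|X_s^\varepsilon-X_s\|_{p_2}\,ds$ plus source terms: the norms on the two sides do not match, and since the random coefficient $U_t^\varepsilon/U_s^\varepsilon$ has no finite $L^\infty$-norm you cannot take $p_1=\infty$, so the loop never closes. The paper avoids this by running Gronwall \emph{pathwise} on the auxiliary process $Z_t=X_t\cdot E_0(t)$ (your $V_t$). Writing $\dot Z_t^\varepsilon=b\bigl(Z_t^\varepsilon E_\varepsilon(t)^{-1}\bigr)E_\varepsilon(t)$ and applying the Lipschitz condition, the random factors $E_\varepsilon(t)^{-1}$ and $E_\varepsilon(t)$ cancel in the principal term, producing $|Z_t^\varepsilon-Z_t|\le C_1\int_0^t|Z_s^\varepsilon-Z_s|\,ds+\Lambda_\varepsilon$ with a \emph{deterministic} Gronwall constant $C_1$; only after this does one take $L^p$-norms (which, incidentally, is why the paper obtains $\bigl\|\sup_t|Z_t^\varepsilon-Z_t|\bigr\|_p$ at that intermediate stage). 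You actually introduce $V_t$ but then revert to the integral equation for $X_t$ itself, where the kernel $U_t^\varepsilon/U_s^\varepsilon$ does not cancel and forces the problematic H\"older split. The fix is simply to stay with $V_t$ and estimate $|V_t^\varepsilon-V_t|$ $\omega$-by-$\omega$ before passing to moments; everything else in your sketch then goes through.
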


\begin{remark}\label{Hu}
In Theorem 11.6 of \cite{H} it is proved that
\begin{eqnarray}\label{Hu estimate}
\Big\Vert  \sup_{t\in [0,T]}|X_t^{\pi}-X_t|\Big\Vert_p\leq C_{p,T}(\log|\pi|)^2|\pi|^{\frac{1}{2}+\frac{1}{\log|\pi|}}
\end{eqnarray}
where $\pi$ is a partition of the interval $[0,T]$, $|\pi|$ denotes the mesh of the partition $\pi$ and $\{X_t^{\pi}\}_{t\in [0,T]}$ stands for the solution of
\begin{eqnarray*}\label{Hu SDE}
\frac{dX_t^{\pi}}{dt}=b(t,X_t^{\pi})+\sigma(t,X_t^{\pi})\cdot\frac{dB_t^{\pi}}{dt},
\end{eqnarray*}
with $\{B_t^{\pi}\}_{t\in [0,T]}$ being the polygonal approximation of $\{B_t\}_{t\in [0,T]}$ associated to the partition $\pi$. The above result is stated and proved for general nonlinear systems of SDEs driven by a multidimensional Brownian motion. Moreover, the topology utilized in (\ref{Hu estimate}) is stronger than the one in (\ref{SDE1}) (where the supremum is outside of the $\mathcal{L}^p(W,\mu)$-norm).\\
It is not difficult to see that the polygonal approximation $\{B^{\pi}_t\}_{t\in [0,T]}$ is included in the family of approximations $\{B^{\varepsilon}_t\}_{t\in [0,T]}$ considered in the present paper (the parameter $\varepsilon$ reduces to the mesh of the partition $|\pi|$) and in that case we get
\begin{eqnarray*}
\sup_{t\in [0,T]}\Vert B_t^{\pi}-B_t\Vert_2=\sup_{t\in [0,T]}|K_{|\pi|}(t,\cdot)-1_{[0,t]}(\cdot)|=C\sqrt{|\pi|}.
\end{eqnarray*}
Substituting in (\ref{SDE1}) we obtain
\begin{eqnarray*}
\sup_{t\in [0,T]}\Vert X_t^{\pi}-X_t\Vert_{p}\leq C\cdot\mathcal{S}_q(C\sqrt{|\pi|})
\end{eqnarray*}
which behaves like $\sqrt{|\pi|}$  for $|\pi|$ going to zero. A comparison with (\ref{Hu estimate}) shows that Theorem \ref{main theorem 1} provides a highest rate of convergence, at the price of a weaker topology and more restrictive conditions on the class of the SDEs considered.\\
The result and proof of Theorem \ref{main theorem 1} are however necessary for the comparison proposed in the present paper.
\end{remark}

\begin{theorem}\label{main theorem 2}
Let $\{Y_t\}_{t\in [0,T]}$ be the unique solution of the It\^o SDE
\begin{eqnarray}\label{ito SDE}
dY_t=b(t,Y_t)dt+\sigma(t)Y_tdB_t,\quad t\in ]0,T]\quad\quad Y_0=x
\end{eqnarray}
and for any $\varepsilon>0$ let $\{Y_t^{\varepsilon}\}_{t\in[0,T]}$ be the unique solution of
\begin{eqnarray}\label{approx ito}
\frac{dY_t^{\varepsilon}}{dt}=b(t,Y_t^{\varepsilon})+\sigma(t)Y_t^{\varepsilon}\diamond\frac{dB_t^{\varepsilon}}{dt},\quad
Y_0^{\varepsilon}=x.
\end{eqnarray}
Then, for any $p\geq 1$ there exists a positive constant $C$ (depending on $p$, $|x|$, $T$, $C_1$, $C_2$ and $M$) such that for any $q$ greater than $p$
\begin{eqnarray*}
\sup_{t\in [0,T]}\Vert Y_t^{\varepsilon}-Y_t\Vert_{p}\leq C\cdot\mathcal{S}_q\left(\sqrt{2}\sup_{t\in [0,T]}|K_{\varepsilon}(t,\cdot)-1_{[0,t]}(\cdot)|\right)
\end{eqnarray*}
where $\mathcal{S}$ is the function defined in (\ref{def S}).
\end{theorem}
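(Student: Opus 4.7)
The plan is to mirror the proof of Theorem~\ref{main theorem 1}, reducing the convergence to a comparison of two pathwise random ODEs, but with the Wick product handled through an integrating-factor argument based on the identity $F\diamond\mathcal{E}(g)=T_{-g}(F)\cdot\mathcal{E}(g)$, which is immediate on exponential vectors and extends by density. On the It\^o side, the classical product rule applied to $Y_t/Z_t$ with $Z_t:=\mathcal{E}(\sigma\,1_{[0,t]})$ gives $Y_t=Z_tU_t$, where $U_t$ solves the pathwise random ODE $\dot U_t=Z_t^{-1}b(t,Z_tU_t)$, $U_0=x$. On the approximated side, set $h_{\varepsilon}(t,u):=\int_0^t\sigma(s)\partial_s K_{\varepsilon}(s,u)\,ds$, so that $\int_0^T h_{\varepsilon}(t,u)\,dB_u=\int_0^t\sigma(s)\dot B_s^{\varepsilon}\,ds$, and define $Z_t^{\varepsilon}:=\mathcal{E}(h_{\varepsilon}(t,\cdot))$. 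The Wick chain rule yields $\dot Z_t^{\varepsilon}=\sigma(t)Z_t^{\varepsilon}\diamond\dot B_t^{\varepsilon}$, $Z_0^{\varepsilon}=1$, and a Wick variation-of-constants argument produces $Y_t^{\varepsilon}=Z_t^{\varepsilon}\diamond U_t^{\varepsilon}$ with $U_t^{\varepsilon}$ satisfying the Wick ODE $\dot U_t^{\varepsilon}=\mathcal{E}(-h_{\varepsilon}(t,\cdot))\diamond b(t,Z_t^{\varepsilon}\diamond U_t^{\varepsilon})$.

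Applying the identity above with $g=h_{\varepsilon}(t,\cdot)$ rewrites the solution as $Y_t^{\varepsilon}=V_t^{\varepsilon}\cdot Z_t^{\varepsilon}$ with $V_t^{\varepsilon}:=T_{-h_{\varepsilon}(t,\cdot)}U_t^{\varepsilon}$, and translating the Wick ODE by $-h_{\varepsilon}(t,\cdot)$, using that $T_g$ is simultaneously a Wick and an ordinary algebra homomorphism, produces a genuine pathwise random ODE for $V_t^{\varepsilon}$ directly comparable to the one for $U_t$. The error is then split as $Y_t^{\varepsilon}-Y_t=(Z_t^{\varepsilon}-Z_t)V_t^{\varepsilon}+Z_t(V_t^{\varepsilon}-U_t)$ and its two parts are estimated separately: by H\"older's inequality together with the sharp bound $\|\mathcal{E}(f)-\mathcal{E}(g)\|_p\le C(p,|f|,|g|)\,|f-g|$, the first summand reduces to controlling $|h_{\varepsilon}(t,\cdot)-\sigma\,1_{[0,t]}(\cdot)|$, which by construction is the $L^2$-distance between the kernels of two Wiener integrals approximating $\int_0^t\sigma\,dB$ and is bounded using Assumption~\ref{assumption on K}. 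The second summand is handled by a Gronwall argument on the Lipschitz random ODEs.

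The main obstacle, absent from Theorem~\ref{main theorem 1}, is that $V_t^{\varepsilon}$ involves the Cameron-Martin translation $T_{-h_{\varepsilon}(t,\cdot)}$ of $U_t^{\varepsilon}$, so that even after obtaining a pathwise random ODE for $V_t^{\varepsilon}$, its $\mathcal{L}^p(W,\mu)$-comparison with $U_t$ must factor through a bound on $\|T_{-h_{\varepsilon}(t,\cdot)}F\|_p$ in terms of $\|F\|_q$, and such a bound necessarily carries a Cameron-Martin density factor $\exp(O(|h_{\varepsilon}|^2))$. This is precisely the content of the Malliavin-calculus tool in Lemma~\ref{Malliavin}, which forces the choice of a strictly larger exponent $q>p$ in the statement and gives rise to the exponential factor $\exp\{q\lambda^2\}$ in $\mathcal{S}_q(\lambda)$. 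The extra $\sqrt{2}$ inside the argument of $\mathcal{S}_q$ accumulates from the quadratic terms $|f|^2+|g|^2$ that appear when $\|\mathcal{E}(f)-\mathcal{E}(g)\|_p$ is estimated by passing through the stochastic exponential of the sum of the two arguments, reflecting the additional Wick-calculus normalization $\tfrac{1}{2}|\cdot|^2$ absent from the Stratonovich analysis.
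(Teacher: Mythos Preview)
Your overall strategy---integrating factor, reduction to random ODEs, Gronwall plus H\"older---matches the paper's, but the execution contains a genuine gap at the central step, and you misidentify the role of Lemma~\ref{Malliavin}.

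The gap is the claim that ``translating the Wick ODE by $-h_{\varepsilon}(t,\cdot)$ \dots\ produces a genuine pathwise random ODE for $V_t^{\varepsilon}$.'' Since the shift $h_{\varepsilon}(t,\cdot)$ depends on $t$, applying $T_{-h_{\varepsilon}(t,\cdot)}$ to both sides of the equation for $U_t^{\varepsilon}$ does \emph{not} yield an equation for $V_t^{\varepsilon}=T_{-h_{\varepsilon}(t,\cdot)}U_t^{\varepsilon}$: one has $T_{-h_{\varepsilon}(t,\cdot)}\dot U_t^{\varepsilon}\neq \dot V_t^{\varepsilon}$. The correct derivative picks up an extra term, which (see Proposition~\ref{îto9}) equals $V_t^{\varepsilon}\cdot\delta(\partial_t h_{\varepsilon})-V_t^{\varepsilon}\diamond\delta(\partial_t h_{\varepsilon})=D_{\partial_t h_{\varepsilon}}V_t^{\varepsilon}$, a Malliavin directional derivative. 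So $V_t^{\varepsilon}$ does not satisfy a Wick-free pathwise ODE, and your Gronwall step on $V_t^{\varepsilon}-U_t$ cannot be carried out as described. The paper avoids this by not translating at all at the ODE stage: it uses the identity of Lemma~\ref{ito5} to rewrite the Wick ODE for your $U_t^{\varepsilon}$ directly as the ordinary random ODE $\dot U_t^{\varepsilon}=b\bigl(U_t^{\varepsilon}\cdot\mathcal{E}(h_{\varepsilon}(t,\cdot))^{-1}\bigr)\cdot\mathcal{E}(h_{\varepsilon}(t,\cdot))$, and runs Gronwall on $U_t^{\varepsilon}$ against the analogous process for the exact solution.

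Relatedly, Lemma~\ref{Malliavin} is not a bound of the type $\|T_gF\|_p\le C\|F\|_q$---that is merely Cameron--Martin (Theorem~14.1 in Janson), and it does not produce the small factor you need. Lemma~\ref{Malliavin} states $\|T_hX-X\|_p\le C|h|\,\|X\|_{\mathbb{D}^{1,q}}$, a quantitative continuity in the shift. The paper needs it for a term you have hidden inside $V_t^{\varepsilon}-U_t$: after Gjessing's lemma the difference $Y_t^{\varepsilon}-Y_t$ splits into \emph{three} pieces, one of which is $(T_{-K_{\varepsilon}(t,\cdot)}U_t^{\varepsilon}-T_{-1_{[0,t]}}U_t^{\varepsilon})\cdot\mathcal{E}(1_{[0,t]})$, i.e.\ two \emph{different} translations of the \emph{same} random variable. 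It is precisely here that the Malliavin estimate supplies the factor $|K_{\varepsilon}(t,\cdot)-1_{[0,t]}|$. Your two-term split absorbs this into $V_t^{\varepsilon}-U_t$ without a mechanism to extract the rate. Finally, the $\sqrt{2}$ does not arise from ``$|f|^2+|g|^2$'' in a sum-of-arguments estimate; it enters through the bound on $\|\mathcal{E}(f)^{-1}-\mathcal{E}(g)^{-1}\|_p$ via the identity $\mathcal{E}(f)^{-1}=\Gamma(1/\sqrt{2})\exp\{-\delta(\sqrt{2}f)\}$ (Proposition~\ref{distance inverse stochastic exponentials}).
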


\begin{corollary}\label{corollary}
In the notation of Theorem \ref{main theorem 1} and Theorem \ref{main theorem 2}, we have for any $p\geq 1$ that
\begin{eqnarray*}
\lim_{\varepsilon\to 0^+}\sup_{t\in [0,T]}\Vert X_t^{\varepsilon}-X_t\Vert_{p}=\lim_{\varepsilon\to 0^+}\sup_{t\in [0,T]}\Vert Y_t^{\varepsilon}-Y_t\Vert_{p}=0
\end{eqnarray*}
where both limits have rate of convergence of order
\begin{eqnarray*}
\sup_{t\in [0,T]}|K_{\varepsilon}(t,\cdot)-1_{[0,t]}(\cdot)|\quad\mbox{ as $\varepsilon$ tends to zero}.
\end{eqnarray*}
\end{corollary}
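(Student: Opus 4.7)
The plan is to derive the corollary directly from Theorems \ref{main theorem 1} and \ref{main theorem 2} by analyzing the behavior of the function $\mathcal{S}_q$ near zero. Set $\lambda_\varepsilon := \sup_{t\in [0,T]}|K_{\varepsilon}(t,\cdot)-1_{[0,t]}(\cdot)|$. Assumption \ref{assumption on K}, and in particular equation (\ref{kernel}), guarantees that $\lambda_\varepsilon \to 0^+$ as $\varepsilon \to 0^+$, so the whole question reduces to a soft estimate on $\mathcal{S}_q$ in a neighborhood of the origin.

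First I would fix $p \geq 1$, choose any $q > p$, and write down the two estimates provided by Theorems \ref{main theorem 1} and \ref{main theorem 2}:
\begin{eqnarray*}
\sup_{t\in [0,T]}\Vert X_t^{\varepsilon}-X_t\Vert_{p}\leq C\cdot\mathcal{S}_q(\lambda_\varepsilon),\qquad
\sup_{t\in [0,T]}\Vert Y_t^{\varepsilon}-Y_t\Vert_{p}\leq C\cdot\mathcal{S}_q(\sqrt{2}\,\lambda_\varepsilon).
\end{eqnarray*}
Since both right-hand sides are continuous functions of $\lambda_\varepsilon$ that vanish at $\lambda_\varepsilon=0$ (indeed $\mathcal{S}_q(0)=0\cdot e^0 + e^0-1=0$), the two convergence statements follow immediately.

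For the rate, I would Taylor expand the two summands in the definition (\ref{def S}) around $\lambda=0$: the term $\lambda\exp\{q\lambda^2\}$ equals $\lambda + O(\lambda^3)$, while $\exp\{\lambda^2/2\}-1$ equals $\lambda^2/2 + O(\lambda^4)$. Consequently $\mathcal{S}_q(\lambda) = \lambda + O(\lambda^2)$ as $\lambda \to 0^+$, so $\mathcal{S}_q(\lambda)/\lambda \to 1$. Substituting $\lambda=\lambda_\varepsilon$ (resp.\ $\lambda=\sqrt{2}\,\lambda_\varepsilon$) shows that both $\mathcal{S}_q(\lambda_\varepsilon)$ and $\mathcal{S}_q(\sqrt{2}\,\lambda_\varepsilon)$ are of order $\lambda_\varepsilon$ as $\varepsilon \to 0^+$, which is exactly the claimed rate.

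There is really no obstacle here: the corollary is a packaging result that simply observes that the common bound $\mathcal{S}_q$ appearing in the two main theorems is asymptotically linear at the origin, so both approximation schemes inherit the rate of the noise approximation itself. The only point worth being careful about is that the constant $C$ and the choice of $q>p$ depend on $p$ but not on $\varepsilon$, so the $O(\lambda_\varepsilon)$ bound is uniform in $\varepsilon$ once $p$ is fixed.
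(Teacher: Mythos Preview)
Your proposal is correct and follows essentially the same approach as the paper: the paper's proof consists solely of observing that $\lim_{\lambda\to 0^+}\mathcal{S}_q(\lambda)/\lambda=1$ for all $q\geq 1$, which is exactly the asymptotic linearity you derive via Taylor expansion. Your write-up is simply a more detailed unpacking of that one-line argument.
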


\begin{proof}
It follows from
\begin{eqnarray}\label{limit of S}
\lim_{\lambda\to 0^+}\frac{\mathcal{S}_q(\lambda)}{\lambda}=1.
\end{eqnarray}
for all $q\geq 1$.
\end{proof}

\noindent It is well known (see for instance Karatzas and Shreve \cite{KS}) that the It\^o SDE (\ref{ito SDE}) can be reformulated as the Stratonovich SDE
\begin{eqnarray}\label{ito-stra SDE}
dY_t=\left(b(t,Y_t)-\frac{1}{2}\sigma(t)Y_t\right)dt+\sigma(t)Y_t\circ dB_t,\quad t\in ]0,T]\quad\quad Y_0=x.
\end{eqnarray}
The next theorem provides a similar representation for the approximated It\^o equation (\ref{approx ito}) in terms of a suitable approximated Stratonovich equation. The proof can be found in Section 4.

\begin{theorem}\label{main theorem 3}
For any $\varepsilon>0$ let $\{Y_t^{\varepsilon}\}_{t\in[0,T]}$ be the unique solution of
\begin{eqnarray*}
\frac{dY_t^{\varepsilon}}{dt}=b(t,Y_t^{\varepsilon})+\sigma(t)Y_t^{\varepsilon}\diamond\frac{dB_t^{\varepsilon}}{dt},\quad
X_0^{\varepsilon}=x.
\end{eqnarray*}
Then, for any $t\in [0,T]$ we have
\begin{eqnarray}\label{ito}
Y_{t}^{\varepsilon}=T_{-K_{\varepsilon}(t,\cdot)}S_{t}^{\varepsilon},
\end{eqnarray}
where $\{S_{t}^{\varepsilon}\}_{t\in[0,T]}$ is the unique solution of
\begin{eqnarray*}
\frac{dS_{t}^{\varepsilon}}{dt}=b(t,S_{t}^{\varepsilon})+\frac{1}{2}\frac{d|K_{\varepsilon}(t,\cdot)|^{2}}{dt}\cdot S_{t}^{\varepsilon}+ \sigma(t)S_{t}^{\varepsilon}\cdot\frac{dB_{t}^{\varepsilon}}{dt},\quad S_{0}^{\varepsilon}=x.
\end{eqnarray*}
\end{theorem}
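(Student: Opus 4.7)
The strategy is to invert the claimed relation and verify it by direct substitution together with a uniqueness argument. Set $\widetilde{S}_t^{\varepsilon}:=T_{K_{\varepsilon}(t,\cdot)}Y_t^{\varepsilon}$; the group law $T_{-g}\circ T_{g}=\mathrm{id}$ (checked on stochastic exponentials and extended by density) yields $T_{-K_{\varepsilon}(t,\cdot)}\widetilde{S}_t^{\varepsilon}=Y_t^{\varepsilon}$, so it suffices to show that $\widetilde{S}_t^{\varepsilon}$ satisfies the ODE stated for $S_t^{\varepsilon}$. The identity (\ref{ito}) then follows from uniqueness for that random first-order ODE, which is a pathwise Picard argument using the Lipschitz continuity of $b$ and the almost sure continuity of $t\mapsto\dot{B}_t^{\varepsilon}$.

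The engine of the computation is a short list of identities tying together the translation $T_g$, the Wick product $\diamond$, and the Malliavin derivative $D$. Each is first verified on stochastic exponentials $\mathcal{E}(f)$---where everything reduces to an explicit manipulation of $\exp\{\langle f,g\rangle\}$---and extended by linearity and density: (i) a Leibniz rule $\frac{d}{dt}[T_{g(t,\cdot)}F_t]=T_{g(t,\cdot)}\bigl[\frac{dF_t}{dt}+\langle DF_t,\partial_t g(t,\cdot)\rangle\bigr]$ for time-varying translations of time-varying functionals; (ii) the Wick-homomorphism property $T_g(F\diamond G)=(T_gF)\diamond(T_gG)$; (iii) the first-chaos translation formula $T_g I_1(h)=I_1(h)+\langle g,h\rangle$, to be applied to $\dot{B}_t^{\varepsilon}=I_1(\partial_t K_{\varepsilon}(t,\cdot))$; (iv) the commutation $D_sT_g=T_gD_s$ between Malliavin derivative and translation; and (v) the Wick-to-pointwise reduction $F\diamond I_1(h)=F\cdot I_1(h)-\langle DF,h\rangle$.

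With these tools in place the computation is mechanical. Applying (i) to $\widetilde{S}_t^{\varepsilon}$ produces $T_{K_{\varepsilon}}\bigl[\frac{dY_t^{\varepsilon}}{dt}\bigr]+T_{K_{\varepsilon}}[\langle DY_t^{\varepsilon},\partial_t K_{\varepsilon}(t,\cdot)\rangle]$. Substituting the approximated It\^o equation yields three contributions. The drift term $T_{K_{\varepsilon}}[b(t,Y_t^{\varepsilon})]$ equals $b(t,\widetilde{S}_t^{\varepsilon})$ because $T_g$ acts pointwise in $\omega$. The Wick term $T_{K_{\varepsilon}}[\sigma(t)Y_t^{\varepsilon}\diamond\dot{B}_t^{\varepsilon}]$ becomes, by (ii) and (iii), $\sigma(t)\widetilde{S}_t^{\varepsilon}\diamond\bigl(\dot{B}_t^{\varepsilon}+\tfrac{1}{2}\tfrac{d}{dt}|K_{\varepsilon}(t,\cdot)|^{2}\bigr)$ after recognising $\langle K_{\varepsilon}(t,\cdot),\partial_t K_{\varepsilon}(t,\cdot)\rangle=\tfrac{1}{2}\tfrac{d}{dt}|K_{\varepsilon}(t,\cdot)|^{2}$; identity (v) then splits this into a pointwise product plus a Malliavin correction. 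The leftover piece $T_{K_{\varepsilon}}[\langle DY_t^{\varepsilon},\partial_t K_{\varepsilon}(t,\cdot)\rangle]$ equals $\langle D\widetilde{S}_t^{\varepsilon},\partial_t K_{\varepsilon}(t,\cdot)\rangle$ by (iv). Collecting everything reproduces the right-hand side of the ODE declared for $S_t^{\varepsilon}$.

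The hard part is justifying the Leibniz rule (i) at the required level of generality and guaranteeing the Malliavin-differentiability needed for the $\langle DY_t^{\varepsilon},\partial_t K_{\varepsilon}\rangle$ terms to be well defined. The cleanest route is to prove (i) on the total set of stochastic exponentials---where the left-hand side is an elementary $t$-derivative and the right-hand side is explicit---and close by an approximation argument that rests on continuity of $T_g$ between suitable $\mathcal{L}^p(W,\mu)$ spaces, exactly the sort of input supplied by the Malliavin-calculus Lemma \ref{Malliavin} already used in the paper. The required Malliavin-regularity of $Y_t^{\varepsilon}$ uniformly in $t$ is inherited from the linear-in-$Y$ structure of (\ref{approx ito}), which after isolating the affine part admits a Wick-exponential representation and is handled in the general case by a fixed-point argument in a Malliavin--Sobolev-type space.
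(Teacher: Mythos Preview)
Your proposal is correct and is essentially a variant of the paper's \emph{alternative} proof in Section~4.1. Both arguments rest on a Leibniz rule for $\frac{d}{dt}(T_{h(t,\cdot)}X_t)$ followed by a direct verification and a uniqueness appeal. Two differences are worth recording. First, you translate $Y_t^{\varepsilon}$ by $+K_{\varepsilon}$ and check that the result satisfies the $S_t^{\varepsilon}$-equation, whereas the paper translates $S_t^{\varepsilon}$ by $-K_{\varepsilon}$ and checks that the result satisfies the $Y_t^{\varepsilon}$-equation; these are logically equivalent once uniqueness is available for both ODEs. Second, your Leibniz rule~(i) is phrased through the Malliavin derivative, while the paper's Proposition~\ref{îto9} reads
\[
\frac{d}{dt}\bigl(T_{h(t,\cdot)}X_t\bigr)=T_{h(t,\cdot)}\frac{dX_t}{dt}+T_{h(t,\cdot)}X_t\cdot\delta(\partial_th(t,\cdot))-T_{h(t,\cdot)}X_t\diamond\delta(\partial_th(t,\cdot)),
\]
with no $DX_t$ in sight. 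The two formulations are interchangeable on sufficiently smooth functionals via your identity~(v), but the paper's version has the practical advantage that its hypotheses ask only $X_t\in\mathcal{L}^p$ for some $p>1$ and $t\mapsto X_t$ differentiable; the Malliavin-regularity issue you raise in your last paragraph is thereby bypassed, and the proof of Proposition~\ref{îto9} itself runs entirely through Gjessing's Lemma and the Wick chain rule rather than through an approximation in $\mathbb{D}^{1,q}$.

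You should also be aware that the paper's \emph{primary} proof of the theorem follows a genuinely different and more elementary route. It exploits the representation $Y_t^{\varepsilon}=V_t^{\varepsilon}\diamond\mathcal{E}_{\varepsilon}(t)^{\diamond-1}$ already obtained in Section~3, uses Lemma~\ref{ito5} to rewrite the Wick ODE for $V_t^{\varepsilon}$ in pointwise form, and then recognises $V_t^{\varepsilon}=S_t^{\varepsilon}\cdot\mathcal{E}_{\varepsilon}(t)$ by matching against the integrating-factor solution of a scalar linear ODE. A single application of Gjessing's Lemma then turns $(S_t^{\varepsilon}\cdot\mathcal{E}_{\varepsilon}(t))\diamond\mathcal{E}_{\varepsilon}(t)^{\diamond-1}$ directly into $T_{-K_{\varepsilon}(t,\cdot)}S_t^{\varepsilon}$. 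This argument needs neither a differentiation rule for time-dependent translations nor any Malliavin regularity, at the cost of relying on the specific affine structure of the equation rather than proceeding by generic calculus identities as you do.
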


\noindent The paper is organized as follows: Section 2 and Section 3 are devoted to the proofs of Theorem \ref{main theorem 1} and Theorem \ref{main theorem 2}, respectively. Both sections also contain some preliminary results and estimates utilized in the proofs of the main results, which are divided in two major steps. Section 4 contains two different proofs of Theorem \ref{main theorem 3}, the second one being a direct verification of the identity (\ref{ito}).

\section{Proof of Theorem \ref{main theorem 1}}

\subsection{Auxiliary results and remarks: Stratonovich case}
The proof of Theorem \ref{main theorem 1} will be carried for the simplified equation where $b$ does not depend on $t$ and $\sigma$ is identically equal to one. Straightforward modifications will lead to the general case.\\ \noindent The existence and uniqueness for the solutions of (\ref{approx stra}) and (\ref{stra SDE}) follow, in view of Assumption \ref{assumption on K} and Assumption \ref{assumption on b and sigma}, by standard results in the theory of stochastic and ordinary differential equations. We also refer the reader to Theorem 5.5 in \cite{DLS 2013} for a proof using the techniques adopted in this paper. To ease the notation we define
\begin{eqnarray*}
E_{\varepsilon}(t):=\exp\{\delta(-K_{\varepsilon}(t,\cdot))\}\quad\mbox{ and }\quad E_{0}(t):=\exp\{\delta(-1_{[0,t]}(\cdot))\}.
\end{eqnarray*}
Here and in the sequel the symbol $\delta(f)$ stands for $\int_0^Tf(s)dB_s$. We begin by observing that (see the proof of Theorem 5.5 in \cite{DLS 2013}) the solution $\{X_t^{\varepsilon}\}_{t\in[0,T]}$ from Theorem \ref{main theorem 1} can be represented as
\begin{eqnarray*}
X_t^{\varepsilon}=Z_t^{\varepsilon}\cdot E_{\varepsilon}(t)^{-1}
\end{eqnarray*}
where
\begin{eqnarray*}
\frac{dZ_t^{\varepsilon}}{dt}=b(Z_t^{\varepsilon}\cdot E^{-1}_{\varepsilon}(t))\cdot E_{\varepsilon}(t),\quad Z_0^{\varepsilon}=x.
\end{eqnarray*}
The same holds true for $\{X_t\}_{t\in[0,T]}$; more precisely,
\begin{eqnarray*}
X_t=Z_t\cdot E_{0}(t)^{-1}
\end{eqnarray*}
where
\begin{eqnarray*}
\frac{dZ_t}{dt}=b(Z_t\cdot E^{-1}_{0}(t))\cdot E_{0}(t),\quad Z_0=x.
\end{eqnarray*}
Moreover, we have the estimates
\begin{eqnarray*}
|Z_t^{\varepsilon}|&\leq&|x|+\int_0^t|b(Z_s^{\varepsilon}\cdot E_{\varepsilon}(s)^{-1})\cdot E_{\varepsilon}(s)|ds\\
&\leq&|x|+\int_0^tC_2\left(1+|Z_s^{\varepsilon}\cdot E_{\varepsilon}(s)^{-1}|\right)\cdot E_{\varepsilon}(s)ds\\
&=&|x|+\int_0^tC_2E_{\varepsilon}(s)ds+\int_0^tC_2|Z_s^{\varepsilon}|ds\\
&\leq& |x|+\int_0^TC_2E_{\varepsilon}(s)ds+\int_0^tC_2|Z_s^{\varepsilon}|ds.
\end{eqnarray*}
By the Gronwall inequality,
\begin{eqnarray*}
|Z_t^{\varepsilon}|\leq\left(|x|+\int_0^T C_2E_{\varepsilon}(s)ds \right)e^{C_2t}.
\end{eqnarray*}
This shows that for any $q\geq 1$ we have the bound
\begin{eqnarray}\label{estimate norm of Z}
\left\|\sup_{t\in [0,T]}|Z_t^{\varepsilon}|\right\|_q&\leq&\left(|x|+\int_0^T C_2\Vert E_{\varepsilon}(s)\Vert_qds \right)e^{C_2T}\nonumber\\
&=&\left(|x|+\int_0^T C_2\exp\left\{\frac{q}{2}|K_{\varepsilon}(s,\cdot)|^2\right\}ds \right)e^{C_2T}\nonumber\\
&\leq&\left(|x|+C_2 T\exp\left\{\frac{q}{2}\sup_{s\in[0,T]}|K_{\varepsilon}(s,\cdot)|^2\right\} \right)e^{C_2T}.
\end{eqnarray}

\noindent To prove Theorem \ref{main theorem 1} we need the following estimate which is of independent interest.

\begin{proposition}\label{distance exponentials}
Let $f,g\in L^2([0,T])$. Then, for any $p\geq 1$ we have
\begin{eqnarray*}
\Vert\exp\{\delta(f)\}-\exp\{\delta(g)\}\Vert_p\leq C\mathcal{S}_p(|f-g|)
\end{eqnarray*}
where
\begin{eqnarray*}
\mathcal{S}_p(\lambda):=\lambda\exp\left\{p\lambda^2\right\}+\exp\{\lambda^2/2\}-1,\quad \lambda\in\mathbb{R}
\end{eqnarray*}
and $C$ is a constant depending on $p$ and $|g|$.
\end{proposition}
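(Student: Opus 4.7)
The plan is to exploit the linearity of the Wiener integral $\delta$ to isolate the distance $h := f-g$, and then factorize the resulting exponential into its stochastic exponential part $\mathcal{E}(h)$ plus a deterministic prefactor. Since $\delta(f) = \delta(g) + \delta(h)$, one has the identity
$$\exp\{\delta(f)\} - \exp\{\delta(g)\} = \exp\{\delta(g)\}\bigl(\exp\{\delta(h)\} - 1\bigr).$$
Applying H\"older's inequality with the symmetric conjugate exponents $(2p,2p)$ gives
$$\Vert \exp\{\delta(f)\} - \exp\{\delta(g)\}\Vert_p \leq \Vert \exp\{\delta(g)\}\Vert_{2p}\cdot \Vert \exp\{\delta(h)\} - 1\Vert_{2p}.$$
Since $\delta(g)$ is centered Gaussian with variance $|g|^2$, the first factor equals $\exp\{p|g|^2\}$, which is absorbed into the constant $C = C(p,|g|)$. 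The problem thus reduces to an $\mathcal{L}^{2p}$-estimate of $\exp\{\delta(h)\} - 1$.

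Next, I would write $\exp\{\delta(h)\} = \exp\{|h|^2/2\}\,\mathcal{E}(h)$, so that
$$\exp\{\delta(h)\} - 1 = \exp\{|h|^2/2\}\bigl(\mathcal{E}(h) - 1\bigr) + \bigl(\exp\{|h|^2/2\} - 1\bigr).$$
The second term contributes directly the $\exp\{|h|^2/2\} - 1$ piece appearing in $\mathcal{S}_p(|h|)$. For the first term, represent $\mathcal{E}(h) - 1$ as the stochastic integral $\int_0^T h(s)\mathcal{E}_s(h)\,dB_s$, where $\mathcal{E}_s(h)$ is the stochastic exponential of the restriction of $h$ to $[0,s]$. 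Burkholder-Davis-Gundy followed by Minkowski's integral inequality then give
$$\Vert \mathcal{E}(h) - 1\Vert_{2p}^2 \leq C_p \int_0^T h^2(s) \Vert \mathcal{E}_s(h)\Vert_{2p}^2\,ds,$$
and the explicit moment identity $\Vert \mathcal{E}_s(h)\Vert_{2p}^2 = \exp\{(2p-1)\int_0^s h^2(u)\,du\}$, bounded uniformly by $\exp\{(2p-1)|h|^2\}$, yields
$$\Vert \mathcal{E}(h) - 1\Vert_{2p} \leq C_p\,|h|\exp\{(2p-1)|h|^2/2\}.$$

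Multiplying by the prefactor $\exp\{|h|^2/2\}$ produces $C_p\,|h|\exp\{p|h|^2\}$, so altogether
$$\Vert \exp\{\delta(h)\} - 1\Vert_{2p} \leq C_p\,|h|\exp\{p|h|^2\} + \exp\{|h|^2/2\} - 1 = C_p\,\mathcal{S}_p(|h|),$$
and combining with the first step completes the proof. The only delicate calibration is the choice of H\"older exponents: picking the symmetric pair $(2p,2p)$ is precisely what makes the exponential growth factor land at $p|h|^2$ inside $\mathcal{S}_p$, matching its definition exactly; any asymmetric splitting $(q_1,q_2)$ with $q_1^{-1}+q_2^{-1} = p^{-1}$ would give the correct \emph{order} of decay (by the corollary that $\mathcal{S}_q(\lambda)/\lambda \to 1$) but would spoil the clean form of the constant in $\mathcal{S}_p$. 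The remaining ingredients---linearity of $\delta$, H\"older, BDG, Minkowski, and the explicit Gaussian moments of stochastic exponentials---are standard.
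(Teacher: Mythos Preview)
Your proof is correct and shares the paper's overall architecture: the same factorization $\exp\{\delta(f)\}-\exp\{\delta(g)\}=\exp\{\delta(g)\}(\exp\{\delta(h)\}-1)$, the same H\"older splitting with exponents $(2p,2p)$, and the same decomposition of $\exp\{\delta(h)\}-1$ into a centered piece plus the deterministic remainder $\exp\{|h|^2/2\}-1$. The genuine difference lies in how the centered piece is handled. The paper invokes the Gaussian Poincar\'e inequality $\|X-E[X]\|_p\leq\mathcal{C}(p)\||DX|_{L^2}\|_p$ (with $D$ the Malliavin derivative) applied to $X=\exp\{\delta(h)\}$, which gives $\mathcal{C}(p)|h|\|\exp\{\delta(h)\}\|_p$ in one line. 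You instead write $\mathcal{E}(h)-1$ as an It\^o integral and apply BDG plus Minkowski. Your route is more elementary in that it avoids any Malliavin-calculus machinery and stays within classical stochastic calculus; the paper's route is shorter and aligns with tools (Malliavin derivative, translation operators) that are used repeatedly elsewhere in the article. Both yield exactly the same exponent $p|h|^2$ in the leading term of $\mathcal{S}_p$, so neither approach gains or loses anything quantitatively.
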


\begin{proof}
The proof involves few notions of Malliavin calculus. We refer the reader to the books of Nualart \cite{Nualart} and Bogachev \cite{Bogachev}. Let $f\in L^2([0,T])$ and $p\geq 1$; then, according to the Poincar\'e inequality (see Theorem 5.5.11 in Bogachev \cite{Bogachev}), we can write
\begin{eqnarray*}
\Vert\exp\{\delta(f)\}-1\Vert_p&\leq&\Vert\exp\{\delta(f)\}-E[\exp\{\delta(f)\}]\Vert_p+|E[\exp\{\delta(f)\}]-1|\\
&=&\Vert\exp\{\delta(f)\}-\exp\{|f|^2/2\}]\Vert_p+\exp\{|f|^2/2\}-1\\
&\leq&\mathcal{C}(p)\left\| |D\exp\{\delta(f)\}|_{L^2([0,T])}\right\|_p+\exp\{|f|^2/2\}-1\\
&=&\mathcal{C}(p)\left\| |\exp\{\delta(f)\}f|_{L^2([0,T])}\right\|_p+\exp\{|f|^2/2\}-1\\
&=&\mathcal{C}(p)|f|\Vert\exp\{\delta(f)\}\Vert_p+\exp\{|f|^2/2\}-1\\
&=&\mathcal{C}(p)|f|\exp\left\{\frac{p}{2}|f|^2\right\}+\exp\{|f|^2/2\}-1
\end{eqnarray*}
where $D$ denotes the Malliavin derivative and $\mathcal{C}(p)$ is a positive constant depending only on $p$. Therefore, for any $f,g\in L^2([0,T])$ and $p\geq 1$ we have
\begin{eqnarray*}
\Vert\exp\{\delta(f)\}-\exp\{\delta(g)\}\Vert_p&=&\Vert\exp\{\delta(g)\}\left(\exp\{\delta(f-g)\}-1\right)\Vert_p\\
&\leq&\Vert\exp\{\delta(g)\}\Vert_{2p}\Vert\exp\{\delta(f-g)\}-1\Vert_{2p}\\
&\leq&e^{p|g|^2}\left(\mathcal{C}(2p)|f-g|\exp\left\{p|f-g|^2\right\}+\exp\{|f-g|^2/2\}-1\right)\\
&\leq&C\mathcal{S}_p(|f-g|)
\end{eqnarray*}
where we utilized the H\"older inequality.
\end{proof}

\subsection{Proof of Theorem \ref{main theorem 1}}

The proof is divided in two steps.\\

\noindent \textbf{Step one}: We prove that for any $p\geq 1$ there exists a positive constant $C$ (depending on $p$, $|x|$, $T$, $C_1$, $C_2$ and $M$) such that for any $q$ greater than $p$
\begin{eqnarray}\label{inequality step 1}
\left\|\sup_{t\in [0,T]} |Z_t^{\varepsilon}-Z_t|\right\|_p\leq C\cdot\mathcal{S}_{q}\left(\sup_{s\in [0,T]}|K_{\varepsilon}(s,\cdot)-1_{[0,s]}(\cdot)|\right)
\end{eqnarray}
We begin by using the equations solved by $Z_t^{\varepsilon}$ and $Z_t$ and the assumptions on $b$ to get
\begin{eqnarray*}
|Z_t^{\varepsilon}-Z_t|&=&\Big|\int_0^tb(Z_s^{\varepsilon}E_{\varepsilon}(s)^{-1})E_{\varepsilon}(s)ds-\int_0^tb(Z_sE_0(s)^{-1})E_0(s)ds\Big|\\
&\leq&\Big|\int_0^tb(Z_s^{\varepsilon}E_{\varepsilon}(s)^{-1})E_{\varepsilon}(s)-b(Z_sE_0(s)^{-1})E_{\varepsilon}(s)ds\Big|\\
&&+\Big|\int_0^tb(Z_sE_0(s)^{-1})E_{\varepsilon}(s)-b(Z_sE_0(s)^{-1})E_0(s)ds\Big|\\
&\leq&\int_0^t|b(Z_s^{\varepsilon}E_{\varepsilon}(s)^{-1})-b(Z_sE_0(s)^{-1})|E_{\varepsilon}(s)ds\\
&&+\int_0^t|b(Z_sE_0(s)^{-1})||E_{\varepsilon}(s)-E_0(s)|ds\\
&\leq&\int_0^tC_1|Z_s^{\varepsilon}E_{\varepsilon}(s)^{-1}-Z_sE_0(s)^{-1}|E_{\varepsilon}(s)ds\\
&&+\int_0^tC_2(1+|Z_sE_0(s)^{-1}|)|E_{\varepsilon}(s)-E_0(s)|ds\\
&\leq&C_1\int_0^t|Z_s^{\varepsilon}E_{\varepsilon}(s)^{-1}-Z_sE_{\varepsilon}(s)^{-1}|E_{\varepsilon}(s)+|Z_sE_{\varepsilon}(s)^{-1}-Z_sE_0(s)^{-1}|E_{\varepsilon}(s)ds\\
&&+C_2\int_0^t(1+|Z_s|E_0(s)^{-1})|E_{\varepsilon}(s)-E_0(s)|ds\\
&=&C_1\int_0^t|Z_s^{\varepsilon}-Z_s|ds+C_1\int_0^t|Z_s||E_{\varepsilon}(s)^{-1}-E_0(s)^{-1}|E_{\varepsilon}(s)ds\\
&&+C_2\int_0^t(1+|Z_s|E_0(s)^{-1})|E_{\varepsilon}(s)-E_0(s)|ds\\
&\leq&C_1\int_0^t|Z_s^{\varepsilon}-Z_s|ds+C_1\int_0^T|Z_s||E_{\varepsilon}(s)^{-1}-E_0(s)^{-1}|E_{\varepsilon}(s)ds\\
&&+C_2\int_0^T(1+|Z_s|E_0(s)^{-1})|E_{\varepsilon}(s)-E_0(s)|ds\\
&=&\Lambda_{\varepsilon}+C_1\int_0^t|Z_s^{\varepsilon}-Z_s|ds
\end{eqnarray*}
where
\begin{eqnarray*}
\Lambda_{\varepsilon}&:=&C_1\int_0^T|Z_s||E_{\varepsilon}(s)^{-1}-E_0(s)^{-1}|E_{\varepsilon}(s)ds\\
&&+C_2\int_0^T(1+|Z_s|E_0(s)^{-1})|E_{\varepsilon}(s)-E_0(s)|ds.
\end{eqnarray*}
By the Gronwall inequality we deduce that
\begin{eqnarray*}
|Z_t^{\varepsilon}-Z_t|\leq\Lambda_{\varepsilon}e^{C_1t},\quad t\in[0,T]
\end{eqnarray*}
and hence for $p\geq 1$ the inequality
\begin{eqnarray*}
\left\|\sup_{t\in [0,T]} |Z_t^{\varepsilon}-Z_t|\right\|_p\leq e^{C_1T}\Vert\Lambda_{\varepsilon}\Vert_p.
\end{eqnarray*}
We now estimate $\Vert\Lambda_{\varepsilon}\Vert_p$ by writing $\Lambda_{\varepsilon}=\Lambda_{\varepsilon}^1+\Lambda_{\varepsilon}^2$ where
\begin{eqnarray*}
\Lambda_{\varepsilon}^1:=C_1\int_0^T|Z_s||E_{\varepsilon}(s)^{-1}-E_0(s)^{-1}|E_{\varepsilon}(s)ds
\end{eqnarray*}
and
\begin{eqnarray*}
\Lambda_{\varepsilon}^2:=C_2\int_0^T(1+|Z_s|E_0(s)^{-1})|E_{\varepsilon}(s)-E_0(s)|ds.
\end{eqnarray*}
Applying the triangle and H\"{o}lder inequalities we get
\begin{eqnarray*}
\Vert\Lambda_{\varepsilon}^1\Vert_p&\leq&C_1\int_0^T\Vert|Z_s||E_{\varepsilon}(s)^{-1}-E_0(s)^{-1}|E_{\varepsilon}(s)\Vert_pds\\
&\leq&C_1\int_0^T\Vert
Z_s\Vert_{p_1}\Vert E_{\varepsilon}(s)^{-1}-E_0(s)^{-1}\Vert_{p_2}\Vert
E_{\varepsilon}(s)\Vert_{p_3}ds
\end{eqnarray*}
where $p_1, p_2, p_3\in [1,+\infty[$ satisfy $\frac{1}{p_1}+\frac{1}{p_2}+\frac{1}{p_3}=\frac{1}{p}$. From the estimate (\ref{estimate norm of Z}) and the identity
\begin{eqnarray*}
\Vert E_{\varepsilon}(s)\Vert_{p_3}=\exp\left\{\frac{p_3}{2}|K_{\varepsilon}(s,\cdot)|^2\right\}
\end{eqnarray*}
we can write
\begin{eqnarray*}
\Vert\Lambda_{\varepsilon}^1\Vert_p\leq C \int_0^T\Vert E_{\varepsilon}(s)^{-1}-E_0(s)^{-1}\Vert_{p_2}ds
\end{eqnarray*}
where $C$ denotes a positive constant depending on $C_1$, $C_2$, $|x|$, $T$, $p$ and $M$ (in the sequel $C$ will denote a generic constant, depending on the previously specified parameters, which may vary from one line to another). Moreover, employing Proposition \ref{distance exponentials} with $f(\cdot)=K_{\varepsilon}(s,\cdot)$ and $g(\cdot)=1_{[0,s]}(\cdot)$ we conclude that
\begin{eqnarray}\label{estimate for lambda_1}
\Vert\Lambda_{\varepsilon}^1\Vert_p&\leq& C \int_0^T\mathcal{S}_{p_2}(|K_{\varepsilon}(s,\cdot)-1_{[0,s]}(\cdot)|)ds\nonumber\\
&\leq&C\cdot\mathcal{S}_{p_2}\left(\sup_{s\in [0,T]}|K_{\varepsilon}(s,\cdot)-1_{[0,s]}(\cdot)|\right).
\end{eqnarray}
Note that for any $p\geq 1$ the function $\lambda\mapsto\mathcal{S}(\lambda)$ is increasing on $[0,+\infty]$.
Let us now consider $\Lambda_{\varepsilon}^2$; if we apply one more time the triangle and H\"{o}lder inequalities, then we get
\begin{eqnarray*}
\Vert\Lambda_{\varepsilon}^2\Vert_p&\leq&C_2\int_0^T\Vert(1+|Z_s|E_0(s)^{-1})|E_{\varepsilon}(s)-E_0(s)|\Vert_pds\\
&\leq&C_2\int_0^T\Vert1+|Z_s|E_0(s)^{-1}\Vert_{q_1}\Vert E_{\varepsilon}(s)-E_0(s)\Vert_{q_2}ds\\
&\leq&C_2\int_0^T(1+\Vert|Z_s|E_0(s)^{-1}\Vert_{q_1})\Vert E_{\varepsilon}(s)-E_0(s)\Vert_{q_2}ds
\end{eqnarray*}
where $q_1,q_2\in [1,+\infty[$ satisfy $\frac{1}{q_1}+\frac{1}{q_2}=\frac{1}{p}$. We observe that
\begin{eqnarray}\label{rhs}
1+\Vert|Z_s|E_0(s)^{-1}\Vert_{q_1}&\leq&1+\Vert Z_s\Vert_{r_1}\cdot\Vert E_0(s)^{-1}\Vert_{r_2}
\end{eqnarray}
where $\frac{1}{q_1}=\frac{1}{r_1}+\frac{1}{r_2}$ and that, according to estimate (\ref{estimate norm of Z}), the right hand side of (\ref{rhs}) is bounded uniformly in $s\in [0,T]$ by a constant $C$ depending on $C_1$, $C_2$, $|x|$, $T$, $p$ and $M$. Therefore,
\begin{eqnarray}\label{estimate for lambda_2}
\Vert\Lambda_{\varepsilon}^2\Vert_p&\leq& C\cdot\mathcal{S}_{q_2}\left(\sup_{s\in [0,T]}|K_{\varepsilon}(s,\cdot)-1_{[0,s]}(\cdot)|\right).
\end{eqnarray}
Here we utilized Proposition \ref{distance exponentials} with $f(\cdot)=-K_{\varepsilon}(s,\cdot)$ and $g(\cdot)=-1_{[0,s]}(\cdot)$. Finally, combining (\ref{estimate for lambda_1}) with (\ref{estimate for lambda_2}) we obtain
\begin{eqnarray*}
\left\|\sup_{t\in [0,T]} |Z_t^{\varepsilon}-Z_t|\right\|_p\leq C\cdot\mathcal{S}_p\left(\sup_{s\in [0,T]}|K_{\varepsilon}(s,\cdot)-1_{[0,s]}(\cdot)|^2\right).
\end{eqnarray*}

\noindent\textbf{Step two}: We prove that for any $p\geq 1$ there exists a positive constant $C$ (depending on $p$, $|x|$, $T$, $C_1$, $C_2$ and $M$) such that for any $q$ greater than $p$
\begin{eqnarray*}
\sup_{t\in [0,T]}\left\|X_t^{\varepsilon}-X_t\right\|_p\leq C\cdot\mathcal{S}_q\left(\sup_{s\in [0,T]}|K_{\varepsilon}(s,\cdot)-1_{[0,s]}(\cdot)|\right).
\end{eqnarray*}
We first note that
\begin{eqnarray*}
X_t^{\varepsilon}-X_t&=&Z_t^{\varepsilon}\cdot E_{\varepsilon}(t)^{-1}-Z_t\cdot E_{0}(t)^{-1}\\
&=&Z_t^{\varepsilon}\cdot E_{\varepsilon}(t)^{-1}-Z^{\varepsilon}_t\cdot E_{0}(t)^{-1}+Z_t^{\varepsilon}\cdot E_{0}(t)^{-1}-Z_t\cdot E_{0}(t)^{-1}\\
&=&Z_t^{\varepsilon}\cdot (E_{\varepsilon}(t)^{-1}-E_{0}(t)^{-1})+(Z_t^{\varepsilon}-Z_t)\cdot E_{0}(t)^{-1}.
\end{eqnarray*}
Now we take $p\geq 1$ and apply the triangle and H\"older inequalities to get
\begin{eqnarray*}
\Vert X_t^{\varepsilon}-X_t\Vert_{p}&\leq&\Vert Z_t^{\varepsilon}\Vert_{p_1}\cdot \Vert E_{\varepsilon}(t)^{-1}-E_{0}(t)^{-1}\Vert_{p_2}+\Vert Z_t^{\varepsilon}-Z_t\Vert_{q_1}\cdot \Vert E_{0}(t)^{-1}\Vert_{q_2}
\end{eqnarray*}
where $\frac{1}{p}=\frac{1}{p_1}+\frac{1}{p_2}=\frac{1}{q_1}+\frac{1}{q_2}$. From estimate (\ref{estimate norm of Z}) we know that $\Vert Z_t^{\varepsilon}\Vert_{p_1}$ is bounded uniformly in $t\in [0,T]$ for any $p_1\geq 1$ while Proposition \ref{distance exponentials} ensures that
\begin{eqnarray*}
\Vert E_{\varepsilon}(t)^{-1}-E_{0}(t)^{-1}\Vert_{p_2}\leq C\mathcal{S}_{p_2}\left(|K_{\varepsilon}(t,\cdot)-1_{[0,t]}(\cdot)|\right)
\end{eqnarray*}
with a constant independent of $t\in [0,T]$. Moreover, inequality (\ref{inequality step 1}) from \emph{Step one} gives for $r>q_1$
\begin{eqnarray*}
\Vert Z_t^{\varepsilon}-Z_t\Vert_{q_1}\leq C\cdot\mathcal{S}_r\left(\sup_{s\in [0,T]}|K_{\varepsilon}(s,\cdot)-1_{[0,s]}(\cdot)|\right).
\end{eqnarray*}
These last assertions imply
\begin{eqnarray*}
\sup_{t\in [0,T]}\Vert X_t^{\varepsilon}-X_t\Vert_{p}\leq C\cdot\mathcal{S}_q\left(\sup_{s\in [0,T]}|K_{\varepsilon}(s,\cdot)-1_{[0,s]}(\cdot)|\right).
\end{eqnarray*}
The proof is complete.

\section{Proof of Theorem \ref{main theorem 2}}

\subsection{Auxiliary results and remarks: It\^o case}

The proof of Theorem \ref{main theorem 2} will be carried for the simplified equation where $b$ does not depend on $t$ and $\sigma$ is identically equal to one. Straightforward modifications will lead to the general case. To ease the notation, we denote for $t\in [0,T]$
\begin{eqnarray*}
\mathcal{E}_{\varepsilon}(t):=\mathcal{E}(-K_{\varepsilon}(t,\cdot))=\exp\left\{\delta(-K_{\varepsilon}(t,\cdot))-\frac{1}{2}|K_{\varepsilon}(t,\cdot)|^2\right\}
\end{eqnarray*}
and
\begin{eqnarray*}
\mathcal{E}_{0}(t):=\mathcal{E}(-1_{[0,t]})=\exp\left\{\delta(-1_{[0,t]}(\cdot))-\frac{t}{2}\right\}.
\end{eqnarray*}
The existence and uniqueness for the solutions of (\ref{approx ito}) and (\ref{ito SDE}) can be found in Theorem 5.5 from \cite{DLS 2013}. There it was observed that the solution $\{Y_t^{\varepsilon}\}_{t\in[0,T]}$ of (\ref{approx ito}) can be represented as
\begin{eqnarray*}
Y_t^{\varepsilon}=V_t^{\varepsilon}\diamond \mathcal{E}_{\varepsilon}(t)^{\diamond -1}
\end{eqnarray*}
where
\begin{eqnarray}\label{equation for V}
\frac{dV_t^{\varepsilon}}{dt}=b\left(V_t^{\varepsilon}\diamond (\mathcal{E}_{\varepsilon}(t))^{\diamond -1}\right)\diamond \mathcal{E}_{\varepsilon}(t),\quad V_0^{\varepsilon}=x
\end{eqnarray}
while the solution $\{Y_t\}_{t\in[0,T]}$ of (\ref{ito SDE}) can be represented as
\begin{eqnarray*}
Y_t=V_t\diamond \mathcal{E}_{0}(t)^{\diamond -1}
\end{eqnarray*}
where
\begin{eqnarray}\label{equation for V 2}
\frac{dV_t}{dt}=b\left(V_t\diamond (\mathcal{E}_{0}(t))^{\diamond -1}\right)\diamond \mathcal{E}_{0}(t),\quad V_0=x.
\end{eqnarray}
Here, for $f\in L^2([0,T])$ the symbol $\mathcal{E}(f)^{\diamond -1}$ stands for the so called \emph{Wick inverse} of $\mathcal{E}(f)$ which, in this particular case, coincides with $\mathcal{E}(-f)$. The next lemma will serve to write equations (\ref{equation for V}) and (\ref{equation for V 2}) in a Wick product-free form.

\begin{lemma}\label{ito5}
If $F\in \mathcal{L}^p(W,\mu)$ for some $p>1$ and $\Psi:\mathbb{R}\rightarrow\mathbb{R}$ is measurable and with at most linear growth at infinity, then for all $h\in L^2([0,T])$ we have:
\begin{equation*}
\Psi\left(F\diamond \mathcal{E}(h)\right)\diamond\mathcal{E}(-h)=\Psi\left(F\cdot\mathcal{E}(-h)^{-1} \right)\cdot\mathcal{E}(-h).
\end{equation*}
\end{lemma}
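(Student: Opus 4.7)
The plan is to exploit the standard Gaussian-analysis identity (see Janson or HOUZ)
\begin{equation*}
F \diamond \mathcal{E}(h) = T_{-h}(F) \cdot \mathcal{E}(h), \qquad h \in L^2([0,T]),
\end{equation*}
which converts Wick multiplication by a stochastic exponential into an ordinary product combined with the Cameron-Martin shift. One verifies it first on $F = \mathcal{E}(f)$: the left side is $\mathcal{E}(f+h)$ by definition of $\diamond$, while the right side reads $\mathcal{E}(f) e^{-\langle f,h\rangle} \cdot \mathcal{E}(h) = \mathcal{E}(f+h)$, using $T_{-h}\mathcal{E}(f) = \mathcal{E}(f) e^{-\langle f,h\rangle}$ and the Gaussian multiplication rule $\mathcal{E}(f)\mathcal{E}(h) = \mathcal{E}(f+h) e^{\langle f,h\rangle}$. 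Linearity and density of stochastic exponentials in $\mathcal{L}^p(W,\mu)$ extend the identity to all $F \in \mathcal{L}^p$.

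With this bridge in hand, the lemma reduces to a short chain of equalities. First, applying the identity with $-h$ in place of $h$ to the outermost Wick product gives
\begin{equation*}
\Psi(F \diamond \mathcal{E}(h)) \diamond \mathcal{E}(-h) = T_{h}\bigl(\Psi(F \diamond \mathcal{E}(h))\bigr) \cdot \mathcal{E}(-h).
\end{equation*}
Because $T_h$ is the path translation $\omega \mapsto \omega + \int_0^\cdot h\,ds$, it commutes with composition by any measurable $\Psi$ and acts as a multiplicative algebra homomorphism with $T_h \circ T_{-h} = \mathrm{id}$ and $T_h\mathcal{E}(h) = \mathcal{E}(h) e^{|h|^2} = \mathcal{E}(-h)^{-1}$. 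Using the bridge identity a second time inside the argument of $\Psi$, these three facts chain together as
\begin{equation*}
T_h\bigl(\Psi(F \diamond \mathcal{E}(h))\bigr) = \Psi\bigl(T_h(T_{-h}(F) \cdot \mathcal{E}(h))\bigr) = \Psi\bigl(F \cdot \mathcal{E}(-h)^{-1}\bigr),
\end{equation*}
which delivers exactly the right-hand side of the lemma.

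The main obstacle is not algebraic but concerns integrability: each Wick product and each application of the bridge identity must be interpreted as an honest equality between random variables under the hypotheses $F \in \mathcal{L}^p$ with $p>1$ and $\Psi$ of at most linear growth. Since $\mathcal{E}(\pm h)$ and $\mathcal{E}(\pm h)^{-1}$ belong to every $\mathcal{L}^q$, H\"older's inequality together with the linear growth of $\Psi$ guarantees that $\Psi(F \cdot \mathcal{E}(-h)^{-1}) \cdot \mathcal{E}(-h) \in \mathcal{L}^1$ and that the corresponding Wick product on the left-hand side is well-defined. The identity is therefore proved first for $F$ in the dense subspace of finite linear combinations of stochastic exponentials, where every manipulation is literal and pointwise, and then extended to general $F \in \mathcal{L}^p$ by continuity.
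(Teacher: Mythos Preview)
Your proof is correct and follows essentially the same route as the paper: both arguments apply Gjessing's lemma (your ``bridge identity'' $F\diamond\mathcal{E}(h)=T_{-h}F\cdot\mathcal{E}(h)$) twice---once to the outer Wick product and once to the inner one---and then use the identity $T_h\mathcal{E}(h)=\mathcal{E}(-h)^{-1}$ together with the fact that $T_h$ commutes with $\Psi$ and with pointwise multiplication. The only differences are cosmetic (you resolve the outer product first, the paper resolves the inner one first) and that you add an explicit integrability and density discussion which the paper leaves implicit.
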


\begin{proof}
We apply twice Gjessing's lemma (see Holden et al. \cite{HOUZ}) to get:
\begin{eqnarray*}
\Psi \left(F\diamond \mathcal{E}(h)\right)\diamond\mathcal{E}(-h)&=& \Psi\left(T_{-h}F\cdot \mathcal{E}(h)\right)\diamond\mathcal{E}(-h)\\
&=&T_{h}\left(\Psi(T_{-h}F\cdot\mathcal{E}(h))\right)\cdot\mathcal{E}(-h)\\
&=&\Psi\left(F\cdot T_{h}\mathcal{E}(h)\right)\cdot\mathcal{E}(-h) \\
&=&\Psi\left(F\cdot\mathcal{E}(-h)^{-1}\right)\cdot\mathcal{E}(-h).
\end{eqnarray*}
Here we utilized the identities
\begin{eqnarray*}
T_{h}\mathcal{E}(h)&=&\mathcal{E}(h)\exp\left\{\int_{0}^{T}h(s)^2ds\right\}\\
&=&\exp\left\{\int_0^Th(s)dB_s+\frac{1}{2}\int_{0}^{T}h(s)^2ds\right\}\\
&=&\mathcal{E}(-h)^{-1}.
\end{eqnarray*}
The proof is complete.
\end{proof}
\noindent Therefore, by Lemma \ref{ito5} we can rewrite equation (\ref{equation for V}) as
\begin{eqnarray*}
\frac{dV_t^{\varepsilon}}{dt}=b\left(V_t^{\varepsilon}\cdot(\mathcal{E}_{\varepsilon}(t))^{-1}\right)\cdot \mathcal{E}_{\varepsilon}(t)
\end{eqnarray*}
and equation (\ref{equation for V 2}) as
\begin{eqnarray*}
\frac{dV_t}{dt}=b\left(V_t\cdot(\mathcal{E}_{0}(t))^{-1}\right)\cdot \mathcal{E}_{0}(t)
\end{eqnarray*}
since, as we mentioned before,
\begin{eqnarray*}
\mathcal{E}_{\varepsilon}(t)^{\diamond -1}=\mathcal{E}(K_{\varepsilon}(t,\cdot))\quad\mbox{ and }\quad
\mathcal{E}_{0}(t)^{\diamond -1}=\mathcal{E}(1_{[0,t]}(\cdot)).
\end{eqnarray*}
The following two propositions are the stochastic exponential's counterparts of Proposition \ref{distance exponentials}.

\begin{proposition}\label{distance stochastic exponentials}
Let $f,g\in L^2([0,T])$. Then, for any $p\geq 1$ we have
\begin{eqnarray*}
\Vert\mathcal{E}(f)-\mathcal{E}(g)\Vert_p\leq C\cdot\mathcal{S}_p(|f-g|)
\end{eqnarray*}
where, as before,
\begin{eqnarray*}
\mathcal{S}_p(\lambda)=\lambda\exp\left\{p\lambda^2\right\}+\exp\{\lambda^2/2\}-1,\quad \lambda\in\mathbb{R}
\end{eqnarray*}
and $C$ is a constant depending on $p$ and $|g|$.
\end{proposition}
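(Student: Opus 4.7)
The plan is to reduce Proposition~\ref{distance stochastic exponentials} to Proposition~\ref{distance exponentials} through the factorisation $\mathcal{E}(f)=\exp\{\delta(f)\}\cdot e^{-|f|^2/2}$, which cleanly separates the random and deterministic parts of the stochastic exponential. I begin from the telescoping identity
\[
\mathcal{E}(f)-\mathcal{E}(g)=e^{-|f|^2/2}\bigl(\exp\{\delta(f)\}-\exp\{\delta(g)\}\bigr)+\bigl(e^{-|f|^2/2}-e^{-|g|^2/2}\bigr)\exp\{\delta(g)\},
\]
apply the triangle inequality in $\mathcal{L}^p(W,\mu)$, and treat the two resulting summands separately.

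For the first summand, I use the trivial bound $e^{-|f|^2/2}\le 1$ and invoke Proposition~\ref{distance exponentials} directly: this already delivers an estimate of the form $C(p,|g|)\,\mathcal{S}_p(|f-g|)$ with no further work.

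For the second (purely deterministic-prefactor) summand, I combine three elementary ingredients: the Lipschitz bound $|e^{-a/2}-e^{-b/2}|\le\tfrac{1}{2}|a-b|$ for $a,b\ge 0$ (applied with $a=|f|^2$, $b=|g|^2$), the factorisation $||f|^2-|g|^2|\le(|f|+|g|)|f-g|$, and the explicit computation $\|\exp\{\delta(g)\}\|_p=e^{p|g|^2/2}$. Crucially I then use $|f|\le|g|+|f-g|$ to eliminate $|f|$ from the coefficient, producing an estimate of the shape $C(p,|g|)(|f-g|+|f-g|^2)$.

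To convert this polynomial expression into a multiple of $\mathcal{S}_p$, I conclude with the elementary comparison $\lambda+\lambda^2\le 2\,\mathcal{S}_p(\lambda)$ valid for every $\lambda\ge 0$: for $\lambda\le 1$ both terms are dominated by $\lambda\le\mathcal{S}_p(\lambda)$, while for $\lambda>1$ each is dominated by $\lambda\,e^{p\lambda^2}\le\mathcal{S}_p(\lambda)$. Adding the two pieces yields the announced inequality. The one genuinely delicate point is ensuring that the final constant depends on $p$ and $|g|$ alone, not on $|f|$; this is exactly what the substitution $|f|\le|g|+|f-g|$ achieves, and it is the step where one must be careful, since a naive Hölder split would otherwise introduce a spurious dependence on $\|\mathcal{E}(f)\|_q$ and hence on $|f|$.
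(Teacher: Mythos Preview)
Your argument is correct, but it takes a genuinely different route from the paper's. The paper exploits the \emph{Wick} factorisation $\mathcal{E}(f)-\mathcal{E}(g)=\mathcal{E}(g)\diamond(\mathcal{E}(f-g)-1)$: since $E[\mathcal{E}(f-g)]=1$, the Poincar\'e inequality applies to $\mathcal{E}(f-g)-1$ without any centering correction, giving the clean bound $\mathcal{C}(p)|f-g|\exp\{\tfrac{p-1}{2}|f-g|^2\}$; the Wick product with $\mathcal{E}(g)$ is then controlled by a norm inequality from \cite{DLS 2011}. Your approach instead uses the \emph{pointwise} factorisation $\mathcal{E}(f)=e^{-|f|^2/2}\exp\{\delta(f)\}$ and a telescoping sum, reducing everything to Proposition~\ref{distance exponentials} as a black box plus elementary real-variable Lipschitz estimates on $a\mapsto e^{-a/2}$. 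What you gain is self-containment: you avoid the external Wick-product inequality entirely and recycle Proposition~\ref{distance exponentials} rather than reproving a parallel Poincar\'e estimate. What the paper's route gains is a slightly sharper intermediate bound (only the first summand of $\mathcal{S}_p$ actually arises before the final majorisation) and a structural parallelism with the proof of Proposition~\ref{distance exponentials} itself. Both are perfectly valid; yours is arguably the more elementary.
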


\begin{proof}
Let $f\in L^2([0,T])$ and $p\geq 1$; then, according to the Poincar\'e inequality (see Theorem 5.5.11 in Bogachev \cite{Bogachev}), we can write
\begin{eqnarray*}
\Vert\mathcal{E}(f)-1\Vert_p&\leq&\mathcal{C}(p)\left\| |D\mathcal{E}(f)|_{L^2([0,T])}\right\|_p\\
&=&\mathcal{C}(p)\left\| |\mathcal{E}(f)f|_{L^2([0,T])}\right\|_p\\
&=&\mathcal{C}(p)|f|\Vert\mathcal{E}(f)\Vert_p\\
&=&\mathcal{C}(p)|f|\exp\left\{\frac{p-1}{2}|f|^2\right\}
\end{eqnarray*}
where $D$ denotes the Malliavin derivative and $\mathcal{C}(p)$ is a positive constant depending only on $p$. Therefore, for any $f,g\in L^2([0,T])$ and $p\geq 1$ we have
\begin{eqnarray*}
\Vert\mathcal{E}(f)-\mathcal{E}(g)\Vert_p&=&\Vert\mathcal{E}(g)\diamond\left(\mathcal{E}(f-g)-1\right)\Vert_p\\
&\leq&\Vert\mathcal{E}(\sqrt{2}g)\Vert_{p}\Vert\mathcal{E}(\sqrt{2}(f-g))-1\Vert_{p}\\
&\leq&e^{(p-1)|g|^2}\mathcal{C}(p)|f-g|\exp\left\{(p-1)|f-g|^2\right\}\\
&\leq&C\mathcal{S}_p(|f-g|)
\end{eqnarray*}
where we utilized an inequality for the Wick product from Da Pelo et al. \cite{DLS 2011}.
\end{proof}

\begin{proposition}\label{distance inverse stochastic exponentials}
Let $f,g\in L^2([0,T])$. Then, for any $p\geq 1$ we have
\begin{eqnarray*}
\Vert\mathcal{E}(f)^{-1}-\mathcal{E}(g)^{-1}\Vert_p\leq C\cdot\mathcal{S}_p(\sqrt{2}|f-g|)
\end{eqnarray*}
where $C$ is a constant depending on $p$ and $|g|$.
\end{proposition}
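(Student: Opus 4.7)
The plan is to mimic the proof of Proposition~\ref{distance stochastic exponentials} by recasting $\mathcal{E}(f)^{-1}-\mathcal{E}(g)^{-1}$ as a Wick product that isolates the dependence on $f-g$, then invoking the same Wick-product inequality from \cite{DLS 2011}. The starting point is the elementary identity $\mathcal{E}(h)^{-1}=\mathcal{E}(-h)\exp\{|h|^2\}$ together with the Wick algebra relation $\mathcal{E}(-g)\diamond\mathcal{E}(g-f)=\mathcal{E}(-f)$, from which I derive
\[
\mathcal{E}(f)^{-1}-\mathcal{E}(g)^{-1}=\mathcal{E}(-g)\diamond\left[\exp\{|f|^2\}\mathcal{E}(g-f)-\exp\{|g|^2\}\right].
\]
I then split the bracket as $\exp\{|f|^2\}(\mathcal{E}(g-f)-1)+(\exp\{|f|^2\}-\exp\{|g|^2\})$ and estimate the two resulting Wick products separately.

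For the first piece, the same Wick product inequality used in Proposition~\ref{distance stochastic exponentials} gives $\|\mathcal{E}(-g)\diamond(\mathcal{E}(g-f)-1)\|_p\leq\|\mathcal{E}(-\sqrt{2}g)\|_p\,\|\mathcal{E}(\sqrt{2}(g-f))-1\|_p$; the first factor equals $\exp\{(p-1)|g|^2\}$, and the Poincar\'e-type estimate from that proof bounds the second factor by $\mathcal{C}(p)\sqrt{2}|f-g|\exp\{(p-1)|f-g|^2\}$. Pulling out the deterministic scalar $\exp\{|f|^2\}$ and using $|f|^2\leq 2|g|^2+2|f-g|^2$ yields a bound of order $C(p,|g|)|f-g|\exp\{(p+1)|f-g|^2\}$. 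Since $p\geq 1$ forces $p+1\leq 2p$, this is absorbed into $C\,\mathcal{S}_p(\sqrt{2}|f-g|)$.

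For the second piece, the Wick product with the deterministic quantity $\exp\{|f|^2\}-\exp\{|g|^2\}$ collapses to ordinary scalar multiplication, so I need to estimate $|\exp\{|f|^2\}-\exp\{|g|^2\}|\cdot\|\mathcal{E}(-g)\|_p$. Writing $\exp\{|f|^2\}-\exp\{|g|^2\}=\exp\{|g|^2\}(\exp\{|f|^2-|g|^2\}-1)$, using $|f|^2-|g|^2=\langle f-g,f+g\rangle$ with $|\langle f-g,f+g\rangle|\leq 2|g||f-g|+|f-g|^2$, and applying Young's inequality $2|g||f-g|\leq|g|^2/\epsilon+\epsilon|f-g|^2$ with an auxiliary parameter $\epsilon>0$, I arrive at a bound of order $C(p,|g|,\epsilon)\,(|f-g|+|f-g|^2)\exp\{(1+\epsilon)|f-g|^2\}$.

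To conclude I must verify that every term above is dominated by a multiple of $\mathcal{S}_p(\sqrt{2}|f-g|)=\sqrt{2}|f-g|\exp\{2p|f-g|^2\}+\exp\{|f-g|^2\}-1$. For $p\geq 1$ I choose $\epsilon\in(0,2p-1)$ so that $1+\epsilon<2p$; then both $|f-g|\exp\{(1+\epsilon)|f-g|^2\}$ and the quadratic term $|f-g|^2\exp\{(1+\epsilon)|f-g|^2\}$ are dominated by $\sqrt{2}|f-g|\exp\{2p|f-g|^2\}$, since the corresponding ratios, viewed as functions of $\lambda=|f-g|\in[0,\infty)$, are continuous and tend to zero both at $\lambda=0$ and as $\lambda\to\infty$. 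I expect this final analytic verification to be the main technical obstacle: the factor $\sqrt{2}$ in the argument of $\mathcal{S}_p$ is essentially sharp, and the delicate case is $p=1$, where $|f-g|^2\exp\{(1+\epsilon)|f-g|^2\}$ fits inside $\mathcal{S}_1(\sqrt{2}|f-g|)$ only because $\epsilon$ can be taken strictly less than $1$.
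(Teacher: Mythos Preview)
Your argument is correct, but it takes a genuinely different route from the paper. The paper's proof is a one-line reduction: it observes the identity
\[
\mathcal{E}(f)^{-1}=\Gamma(1/\sqrt{2})\exp\{-\delta(\sqrt{2}f)\},
\]
where $\Gamma(1/\sqrt{2})$ is the Ornstein--Uhlenbeck (second-quantization) operator acting as $\Gamma(1/\sqrt{2})\mathcal{E}(h)=\mathcal{E}(h/\sqrt{2})$. Since $\Gamma(1/\sqrt{2})$ is a contraction on every $\mathcal{L}^p(W,\mu)$, one immediately gets
\[
\Vert\mathcal{E}(f)^{-1}-\mathcal{E}(g)^{-1}\Vert_p\leq\Vert\exp\{-\delta(\sqrt{2}f)\}-\exp\{-\delta(\sqrt{2}g)\}\Vert_p,
\]
and Proposition~\ref{distance exponentials} applied to $\sqrt{2}f,\sqrt{2}g$ finishes the proof. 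This explains transparently where the $\sqrt{2}$ comes from and avoids any splitting. Your approach instead stays within the Wick-product/Poincar\'e framework of Proposition~\ref{distance stochastic exponentials}, at the cost of handling the deterministic factor $\exp\{|f|^2\}$ by hand via $|f|^2\leq 2|g|^2+2|f-g|^2$ and a Young-inequality argument. That works, and it has the mild advantage of not invoking the second-quantization operator at all, but it is considerably longer and the final verification that everything fits under $\mathcal{S}_p(\sqrt{2}|f-g|)$ is somewhat delicate. One small slip: the ratio $\lambda\exp\{(1+\epsilon)\lambda^2\}/(\sqrt{2}\lambda\exp\{2p\lambda^2\})$ does not tend to zero at $\lambda=0$ (it tends to $1/\sqrt{2}$), but it is still bounded on $[0,\infty)$ when $1+\epsilon\leq 2p$, so your conclusion stands.
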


\begin{proof}
Denote by $\Gamma(1/\sqrt{2})$ the bounded linear operator acting on stochastic exponentials according to the prescription
\begin{eqnarray*}
\Gamma(1/\sqrt{2})\mathcal{E}(f):=\mathcal{E}(f/\sqrt{2}).
\end{eqnarray*}
This operator coincides with the Ornstein-Uhlenbeck semigroup $\{P_t\}_{t\geq 0}$ for a proper choice of the parameter $t$ (see Janson \cite{J} for details) and therefore it is a contraction on any $\mathcal{L}^p(W,\mu)$ for $p\geq 1$. Moreover, by a direct verification one can see that
\begin{eqnarray}\label{anti wick}
\mathcal{E}(f)^{-1}=\Gamma(1/\sqrt{2})\exp\{-\delta(\sqrt{2}f)\}.
\end{eqnarray}
Hence, we can write
\begin{eqnarray*}
\Vert\mathcal{E}(f)^{-1}-\mathcal{E}(g)^{-1}\Vert_p&=&\Vert\Gamma(1/\sqrt{2})\exp\{-\delta(\sqrt{2}f)\}-
\Gamma(1/\sqrt{2})\exp\{-\delta(\sqrt{2}g)\}\Vert_p\\
&\leq&\Vert\exp\{-\delta(\sqrt{2}f)\}-\exp\{-\delta(\sqrt{2}g)\}\Vert_p.
\end{eqnarray*}
Therefore, by means of Proposition \ref{distance exponentials} we can conclude that
\begin{eqnarray*}
\Vert\mathcal{E}(f)^{-1}-\mathcal{E}(g)^{-1}\Vert_p&\leq&\Vert\exp\{-\delta(\sqrt{2}f)\}-
\exp\{-\delta(\sqrt{2}g)\}\Vert_p\\
&\leq&C\cdot\mathcal{S}_p(\sqrt{2}|f-g|).
\end{eqnarray*}
\end{proof}

\begin{remark}
The idea of the proof of the previous proposition, and in particular identity (\ref{anti wick}), is inspired by the investigation carried in Da Pelo and Lanconelli \cite{DL}, where a new probabilistic representation for the solution of the heat equation is derived in terms of the operator $\Gamma(1/\sqrt{2})$ and its inverse.
\end{remark}

\subsection{Proof of Theorem \ref{main theorem 2}}

As before, we divide the proof in two steps.\\

\noindent \textbf{Step one}: We prove that for any $p\geq 1$ there exists a positive constant $C$ (depending on $p$, $|x|$, $T$, $C_1$, $C_2$ and $M$) such that for any $q$ greater than $p$
\begin{eqnarray}\label{inequality step 1 ito}
\left\|\sup_{t\in [0,T]} |V_t^{\varepsilon}-V_t|\right\|_p\leq C\cdot\mathcal{S}_q\left(\sqrt{2}\sup_{s\in [0,T]}|K_{\varepsilon}(s,\cdot)-1_{[0,s]}(\cdot)|\right).
\end{eqnarray}
The proof can be carried following the same line of the proof of \emph{Step one} of Theorem \ref{main theorem 1}; we have simply to replace $\{Z_t\}_{t\in [0,T]}$ and $\{Z^{\varepsilon}_t\}_{t\in [0,T]}$ with $\{V_t\}_{t\in [0,T]}$ and $\{V^{\varepsilon}_t\}_{t\in [0,T]}$, respectively. Moreover, the exponentials $\{E_{\varepsilon}(t)\}_{t\in [0,T]}$ and $\{E_{0}(t)\}_{t\in [0,T]}$  have to be replaced by  $\{\mathcal{E}_{\varepsilon}(t)\}_{t\in [0,T]}$ and $\{\mathcal{E}_{0}(t)\}_{t\in [0,T]}$, respectively.
The estimate (\ref{estimate norm of Z}) changes to
\begin{eqnarray*}
\left\|\sup_{t\in [0,T]}|V_t^{\varepsilon}|\right\|_q&\leq&\left(|x|+C_2 T\exp\left\{\frac{q-1}{2}\sup_{s\in[0,T]}|K_{\varepsilon}(s,\cdot)|^2\right\} \right)e^{C_2T}.
\end{eqnarray*}
We remark that for all $r\geq 1$ we have
\begin{eqnarray*}
\Vert E_{\varepsilon}(t)\Vert_r=\exp\left\{\frac{r}{2}|K_{\varepsilon}(s,\cdot)|^2\right\}
\end{eqnarray*}
while
\begin{eqnarray*}
\Vert \mathcal{E}_{\varepsilon}(t)\Vert_r=\exp\left\{\frac{r-1}{2}|K_{\varepsilon}(s,\cdot)|^2\right\}\quad\mbox{ and }\quad\Vert\mathcal{E}_{\varepsilon}(t)^{-1}\Vert_r=\exp\left\{\frac{r+1}{2}|K_{\varepsilon}(s,\cdot)|^2\right\}.
\end{eqnarray*}
Moreover, we utilize Proposition \ref{distance stochastic exponentials} and Proposition \ref{distance inverse stochastic exponentials} with $f(\cdot)=K_{\varepsilon}(s,\cdot)$ and $g(\cdot)=1_{[0,s]}(\cdot)$ instead of Proposition \ref{distance exponentials}. \\

\noindent\textbf{Step two}: We prove that for any $p\geq 1$ there exists a positive constant $C$ (depending on $p$, $|x|$, $T$, $C_1$, $C_2$ and $M$) such that for any $q$ greater than $p$
\begin{eqnarray*}
\sup_{t\in [0,T]}\left\|Y_t^{\varepsilon}-Y_t\right\|_p\leq C\cdot\mathcal{S}_q\left(\sqrt{2}\sup_{s\in [0,T]}|K_{\varepsilon}(s,\cdot)-1_{[0,s]}(\cdot)|\right).
\end{eqnarray*}
We first note that
\begin{eqnarray*}
Y_t^{\varepsilon}-Y_t=V_t^{\varepsilon}\diamond\mathcal{E}_{\varepsilon}(t)^{\diamond -1}-V_t\diamond\mathcal{E}_{0}(t)^{\diamond -1}.
\end{eqnarray*}
To ease the readability of the formulas we will adopt, only for this part of the proof, the notation
\begin{eqnarray*}
\tilde{\mathcal{E}}_{\varepsilon}(t):=\mathcal{E}_{\varepsilon}(t)^{\diamond -1}\quad\mbox{ and }\quad\tilde{\mathcal{E}}_{0}(t):=\mathcal{E}_{0}(t)^{\diamond -1}.
\end{eqnarray*}
Then, by mean of Gjessing's Lemma we have
\begin{eqnarray*}
Y_t^{\varepsilon}-Y_t&=&V_t^{\varepsilon}\diamond\tilde{\mathcal{E}}_{\varepsilon}(t)-
V_t\diamond\tilde{\mathcal{E}}_{0}(t)\\
&=&V_t^{\varepsilon}\diamond\tilde{\mathcal{E}}_{\varepsilon}(t)-V_t^{\varepsilon}\diamond\tilde{\mathcal{E}}_{0}(t)
+V_t^{\varepsilon}\diamond\tilde{\mathcal{E}}_{0}(t)-V_t\diamond\tilde{\mathcal{E}}_{0}(t)\\
&=&T_{-K_{\varepsilon}(t,\cdot)}V_t^{\varepsilon}\cdot\tilde{\mathcal{E}}_{\varepsilon}(t)-
T_{-1_{[0,t]}(\cdot)}V_t^{\varepsilon}\cdot\tilde{\mathcal{E}}_{0}(t)+
\left(V_t^{\varepsilon}-V_t\right)\diamond\tilde{\mathcal{E}}_{0}(t)\\
&=&T_{-K_{\varepsilon} (t,\cdot)}V_t^{\varepsilon}\cdot\tilde{\mathcal{E}}_{\varepsilon}(t)-
T_{-K_{\varepsilon} (t,\cdot)}V_t^{\varepsilon}\cdot\tilde{\mathcal{E}}_{0}(t)
+T_{-K_{\varepsilon} (t,\cdot)}V_t^{\varepsilon}\cdot\tilde{\mathcal{E}}_{0}(t)\\
&&-T_{-1_{[0,t]}(\cdot)}V_t^{\varepsilon}\cdot\tilde{\mathcal{E}}_{0}(t)+
\left(V_t^{\varepsilon}-V_t\right)\diamond\tilde{\mathcal{E}}_{0}(t)\\
&=&T_{-K_{\varepsilon}(t,\cdot)}V_t^{\varepsilon}\cdot\left(\tilde{\mathcal{E}}_{\varepsilon}(t)
-\tilde{\mathcal{E}}_{0}(t)\right)
+\left(T_{-K_{\varepsilon} (t,\cdot)}V_t^{\varepsilon}-T_{-1_{[0,t]}(\cdot)}V_t^{\varepsilon}\right)\cdot\tilde{\mathcal{E}}_{0}(t)\\
&&+T_{-1_{[0,t]}(\cdot)}\left(V_t^{\varepsilon}-V_t\right)\cdot\tilde{\mathcal{E}}_{0}(t)\\
&=&\mathcal{F}_1+\mathcal{F}_2+\mathcal{F}_3
\end{eqnarray*}
where we set
\begin{eqnarray*}
\mathcal{F}_1:=T_{-K_{\varepsilon}(t,\cdot)}V_t^{\varepsilon}\cdot\left(\tilde{\mathcal{E}}_{\varepsilon}(t)
-\tilde{\mathcal{E}}_{0}(t)\right)\quad\quad\mathcal{F}_2:=\left(T_{-K_{\varepsilon} (t,\cdot)}V_t^{\varepsilon}-T_{-1_{[0,t]}(\cdot)}V_t^{\varepsilon}\right)\cdot\tilde{\mathcal{E}}_{0}(t)
\end{eqnarray*}
and
\begin{eqnarray*}
\mathcal{F}_3:=T_{-1_{[0,t]}(\cdot)}\left(V_t^{\varepsilon}-V_t\right)\cdot\tilde{\mathcal{E}}_{0}(t).
\end{eqnarray*}
Hence, for any $p\geq 1$ we can write
\begin{eqnarray*}
\Vert Y_t^{\varepsilon}-Y_t\Vert_p&\leq&\Vert \mathcal{F}_1\Vert_p+\Vert \mathcal{F}_2\Vert_p+\Vert \mathcal{F}_3\Vert_p.
\end{eqnarray*}
We recall (see Theorem 14.1 in Janson \cite{J}) that for any $g\in L^2([0,T])$ the linear operator $T_g$ is bounded from $\mathcal{L}^q(W,\mu)$ to $\mathcal{L}^p(W,\mu)$ for any $p<q$. Therefore, by the H\"older inequality and Proposition \ref{distance stochastic exponentials} we deduce
\begin{eqnarray*}
\Vert\mathcal{F}_1\Vert_p&=&\left\| T_{-K_{\varepsilon}(t,\cdot)}V_t^{\varepsilon}\cdot\left(\tilde{\mathcal{E}}_{\varepsilon}(t)
-\tilde{\mathcal{E}}_{0}(t)\right)\right\|_p\\
&\leq&\left\| T_{-K_{\varepsilon}(t,\cdot)}V_t^{\varepsilon}\right\|_{q_1}\cdot\left\|\tilde{\mathcal{E}}_{\varepsilon}(t)
-\tilde{\mathcal{E}}_{0}(t)\right\|_{q_2}\\
&\leq& C \left\|V_t^{\varepsilon}\right\|_r\cdot\left\|\tilde{\mathcal{E}}_{\varepsilon}(t)
-\tilde{\mathcal{E}}_{0}(t)\right\|_{q_2}\\
&\leq& C\cdot\mathcal{S}_{q_2}\left(\sqrt{2}\sup_{s\in [0,T]}|K_{\varepsilon}(s,\cdot)-1_{[0,s]}(\cdot)|\right).
\end{eqnarray*}
where $p<q_1<r$, $C$ is a constant depending on the parameters appearing in the statement of the theorem and $\frac{1}{p}=\frac{1}{q_1}+\frac{1}{q_2}$. The term $\Vert\mathcal{F}_3\Vert_p$ is treated similarly with the help of inequality (\ref{inequality step 1 ito}). Let us now focus on $\Vert\mathcal{F}_2\Vert_p$. We first observe that
\begin{eqnarray*}
\Vert\mathcal{F}_2\Vert_p&=&\left\Vert \left(T_{-K_{\varepsilon} (t,\cdot)}V_t^{\varepsilon}-T_{-1_{[0,t]}(\cdot)}V_t^{\varepsilon}\right)\cdot\tilde{\mathcal{E}}_{0}(t) \right\Vert_p\\
&\leq&\left\Vert T_{-K_{\varepsilon} (t,\cdot)}V_t^{\varepsilon}-T_{-1_{[0,t]}(\cdot)}V_t^{\varepsilon}\right\Vert_q
\cdot\left\Vert\tilde{\mathcal{E}}_{0}(t)\right\Vert_r.
\end{eqnarray*}
According to Theorem 14.1 in Janson \cite{J} the map $T_gX$ is jointly continuous in the variables $(g,X)$ from $L^2([0,T])\times\mathcal{L}^q(W,\mu)$ to $\mathcal{L}^p(W,\mu)$ for $p<q$. Therefore, the first term in the last member of the previous inequality tends to zero as $\varepsilon\to 0^+$. However, we need to know the speed of such convergence. The following lemma will help us in this direction.
\begin{lemma}\label{Malliavin}
For any $X\in\mathbb{D}^{1,q}$ and $h\in L^2([0,T])$ with $|h|<\delta$ one has
\begin{eqnarray*}
\Vert T_hX-X\Vert_p\leq C|h|\Vert X\Vert_{\mathbb{D}^{1,q}}
\end{eqnarray*}
where $p<q$ and $C$ depends on $\delta$, $p$ and $q$.
\end{lemma}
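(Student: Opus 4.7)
The plan is to parametrize by a straight line between the identity and $T_h$ on the Wiener space, differentiate using the intertwining of translations with the Malliavin derivative, and then bound the resulting integral using the continuity of $T_g$ as a map $\mathcal{L}^q(W,\mu)\to\mathcal{L}^p(W,\mu)$ cited from Janson.

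First, for $t\in[0,1]$ consider the curve $t\mapsto T_{th}X$ and establish the key differentiation formula
\begin{eqnarray*}
\frac{d}{dt}T_{th}X=T_{th}\langle DX,h\rangle,
\end{eqnarray*}
where $DX$ denotes the Malliavin derivative of $X$ and $\langle\cdot,\cdot\rangle$ is the $L^2([0,T])$ inner product. I would verify this first on a total set of smooth functionals, namely the stochastic exponentials $X=\mathcal{E}(f)$ with $f\in L^2([0,T])$: since $T_{th}\mathcal{E}(f)=\mathcal{E}(f)\exp\{t\langle f,h\rangle\}$ and $D\mathcal{E}(f)=\mathcal{E}(f)\,f$, a direct computation gives exactly the right hand side. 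The identity then extends to $X\in\mathbb{D}^{1,q}$ by density, exploiting the continuity of $T_g$ from $\mathcal{L}^q(W,\mu)$ to $\mathcal{L}^p(W,\mu)$ with norm depending only on $|g|$ (Theorem 14.1 in Janson \cite{J}).

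Second, apply the fundamental theorem of calculus in the Bochner sense, obtaining
\begin{eqnarray*}
T_hX-X=\int_0^1T_{th}\langle DX,h\rangle\,dt,
\end{eqnarray*}
take $\mathcal{L}^p(W,\mu)$ norms, invoke Minkowski's integral inequality, and apply the boundedness of $T_{th}:\mathcal{L}^q(W,\mu)\to\mathcal{L}^p(W,\mu)$ (with $p<q$) together with the fact that its operator norm depends continuously on $|th|\leq|h|<\delta$, hence is uniformly bounded by some constant $C_{p,q,\delta}$. This yields
\begin{eqnarray*}
\Vert T_hX-X\Vert_p\leq C_{p,q,\delta}\int_0^1\Vert\langle DX,h\rangle\Vert_q\,dt\leq C_{p,q,\delta}\,|h|\,\big\Vert|DX|_{L^2([0,T])}\big\Vert_q,
\end{eqnarray*}
where the last step uses the Cauchy-Schwarz inequality pointwise. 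The right hand side is controlled by $C|h|\Vert X\Vert_{\mathbb{D}^{1,q}}$ by definition of the Sobolev norm.

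The main obstacle I expect is the rigorous justification of the differentiation identity and the density extension: one must show that both sides are continuous in $X$ with respect to the $\mathbb{D}^{1,q}$ norm, and that the Bochner integral above makes sense as an $\mathcal{L}^p(W,\mu)$-valued integral uniformly for $t\in[0,1]$. Once Janson's Theorem 14.1 is invoked to secure a uniform operator-norm bound for $\{T_{th}\}_{t\in[0,1]}$ over the ball $|h|<\delta$, the remaining estimates are routine applications of Hölder's inequality and the definition of the Malliavin Sobolev spaces.
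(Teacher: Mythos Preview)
Your proof is correct and follows essentially the same path as the paper's: both establish the translation--derivative relation on stochastic exponentials, extend by density, and invoke Janson's Theorem~14.1 to bound the operator norm of $T_{th}$ from $\mathcal{L}^q$ to $\mathcal{L}^p$ uniformly for $|th|\leq|h|<\delta$. The only cosmetic difference is that the paper applies the mean value theorem to write $T_hX-X=T_{\theta h}D_hX$ for some $\theta\in[0,1]$ on finite linear combinations of exponentials, rather than integrating over $t\in[0,1]$; your integral formulation is in fact slightly cleaner, since the mean-value $\theta$ would in principle depend on each summand.
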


\begin{proof}
Since the linear span of the stochastic exponentials is dense in $\mathcal{L}^p(W,\mu)$ and in $\mathbb{D}^{1,q}$, we will prove the lemma for $X=\sum_{j=1}^n\alpha_j\mathcal{E}(f_j)$ where $\alpha_1,...,\alpha_n\in\mathbb{R}$ and $f_1,...,f_n\in L^2([0,T])$. By the mean value theorem we can write for $\theta\in [0,1]$ that
\begin{eqnarray*}
T_h\sum_{j=1}^n\alpha_j\mathcal{E}(f_j)-\sum_{j=1}^n\alpha_j\mathcal{E}(f_j)&=&
\sum_{j=1}^n\alpha_j\mathcal{E}(f_j)\left( e^{\langle h,f_j\rangle}-1\right)\\
&=&\sum_{j=1}^n\alpha_j\mathcal{E}(f_j)e^{\theta\langle h,f_j\rangle}\langle h,f_j\rangle\\
&=&T_{\theta h}D_h\sum_{j=1}^n\alpha_j\mathcal{E}(f_j)
\end{eqnarray*}
where $D_{\theta h}$ stands for the Malliavin derivative in the direction $\theta h$. We now take the $\mathcal{L}^p(W,\mu)$ norm to get
\begin{eqnarray*}
\left\|T_h\sum_{j=1}^n\alpha_j\mathcal{E}(f_j)-\sum_{j=1}^n\alpha_j\mathcal{E}(f_j)\right\|_p&=&
\left\|T_{\theta h}D_h\sum_{j=1}^n\alpha_j\mathcal{E}(f_j)\right\|_p\\
&\leq&C(h)\left\|D_h\sum_{j=1}^n\alpha_j\mathcal{E}(f_j)\right\|_q\\
&\leq&C(h)|h|\left\|\left|D\sum_{j=1}^n\alpha_j\mathcal{E}(f_j)\right|_{L^2([0,T])}\right\|_q\\
&\leq&C(h)|h|\left\|\sum_{j=1}^n\alpha_j\mathcal{E}(f_j)\right\|_{\mathbb{D}^{1,q}}.
\end{eqnarray*}
\end{proof}
\noindent We now continue the analysis of the term
\begin{eqnarray*}
\left\Vert T_{-K_{\varepsilon (t,\cdot)}}V_t^{\varepsilon}-T_{-1_{[0,t]}(\cdot)}V_t^{\varepsilon}\right\Vert_q.
\end{eqnarray*}
It is not difficult to see that Assumption \ref{assumption on b and sigma} implies that for any $\varepsilon>0$ and $t\in[0,T]$ the random variable $V_t^{\varepsilon}$ belongs to $\mathbb{D}^{1,q}$ for all $q\geq 1$. Moreover, the $\mathbb{D}^{1,q}$-norm of $V_t^{\varepsilon}$ is bounded uniformly with respect to $\varepsilon$ (observe that $V_t$, which corresponds to the case $\varepsilon=0$, is related to an It\^o type SDE which possesses the required smoothness). Therefore,
\begin{eqnarray*}
\left\Vert T_{-K_{\varepsilon} (t,\cdot)}V_t^{\varepsilon}-T_{-1_{[0,t]}(\cdot)}V_t^{\varepsilon}\right\Vert_q&=&
\left\Vert T_{-1_{[0,t]}(\cdot)}\left(T_{1_{[0,t]}(\cdot)-K_{\varepsilon} (t,\cdot)}V_t^{\varepsilon}-V_t^{\varepsilon}\right)\right\Vert_q\\
&\leq&C\left\Vert T_{1_{[0,t]}(\cdot)-K_{\varepsilon} (t,\cdot)}V_t^{\varepsilon}-V_t^{\varepsilon}\right\Vert_r\\
&\leq&C|K_{\varepsilon} (t,\cdot)-1_{[0,t]}(\cdot)|\Vert V_t^{\varepsilon}\Vert_{\mathbb{D}^{1,r}}.
\end{eqnarray*}
for $r>q$. Combining all the estimates above we conclude
\begin{eqnarray*}
\Vert Y_t^{\varepsilon}-Y_t\Vert_p&\leq&\Vert \mathcal{F}_1\Vert_p+\Vert \mathcal{F}_2\Vert_p+\Vert \mathcal{F}_3\Vert_p\\
&\leq& C\left(\mathcal{S}_q\left(\sqrt{2}\sup_{s\in [0,T]}|K_{\varepsilon}(s,\cdot)-1_{[0,s]}(\cdot)|\right)+|K_{\varepsilon} (t,\cdot)-1_{[0,t]}(\cdot)|\right)\\
&\leq&C\left(\mathcal{S}_q\left(\sqrt{2}\sup_{s\in [0,T]}|K_{\varepsilon}(s,\cdot)-1_{[0,s]}(\cdot)|\right)+\sup_{t\in[0,T]}|K_{\varepsilon} (t,\cdot)-1_{[0,t]}(\cdot)|\right)\\
&\leq&C\mathcal{S}_q\left(\sqrt{2}\sup_{s\in [0,T]}|K_{\varepsilon}(s,\cdot)-1_{[0,s]}(\cdot)|\right).
\end{eqnarray*}
The proof of Theorem \ref{main theorem 2} is now complete.

\section{Proof of Theorem \ref{main theorem 3}}

We first note that the solution $\{A_t\}_{t\in[0,T]}$ of
\begin{eqnarray*}
\frac{dA_t}{dt}=b(A_t)+ A_t\cdot g(t)\quad A_0=x,
\end{eqnarray*}
where $g:[0,T]\rightarrow\mathbb{R}$ is a continuous function,
can be represented as
\begin{eqnarray}\label{ito5,5}
A_t=G_t\cdot\exp\left\{\int_0^tg(s)ds\right\}
\end{eqnarray}
where $\{G_t\}_{t\in[0,T]}$ solves
\begin{eqnarray}\label{ito6}
\frac{dG_t}{dt}=b\left(G_t\cdot\exp\left\{\int_0^t g(s)ds\right\}\right)\cdot\exp\left\{-\int_0^t g(s) ds\right\}.
\end{eqnarray}
Moreover, recalling the argument from the previous section, we know that the solution $\{Y_t^{\varepsilon}\}_{t\in[0,T]}$ of (\ref{approx ito}) can be represented as
\begin{eqnarray*}
Y_t^{\varepsilon}=V_t^{\varepsilon}\diamond \mathcal{E}_{\varepsilon}(t)^{\diamond -1}
\end{eqnarray*}
where
\begin{eqnarray*}
\frac{dV_t^{\varepsilon}}{dt}=b\left(V_t^{\varepsilon}\cdot(\mathcal{E}_{\varepsilon}(t))^{-1}\right)\cdot \mathcal{E}_{\varepsilon}(t).
\end{eqnarray*}
Since by definition
\begin{eqnarray*}
(\mathcal{E}_{\varepsilon}(t))^{-1}&=&\exp\left\{\int_0^TK_{\varepsilon}(t,s)dB_s+\frac{1}{2}|K_{\varepsilon}(t,\cdot)|^2\right\}\\
&=&\exp\left\{B_t^{\varepsilon}+\frac{1}{2}|K_{\varepsilon}(t,\cdot)|^2\right\}\\
&=&\exp\left\{\int_0^t\left(\frac{d B_s^{\varepsilon}}{ds}+\frac{1}{2}\frac{d|K_{\varepsilon}(s,\cdot)|^2}{ds}\right)ds\right\}
\end{eqnarray*}
a comparison with (\ref{ito5,5}) and (\ref{ito6}) shows that, by choosing
\begin{eqnarray*}
g(t)=\frac{1}{2}\frac{d}{dt}|K_{\varepsilon}(t,\cdot)|^{2}+\frac{dB_{t}^{\varepsilon}}{dt},
\end{eqnarray*}
we can write
\begin{equation*}
V_{t}^{\varepsilon}=S_{t}^{\varepsilon}\cdot\mathcal{E}_{\varepsilon}(t)
\end{equation*}
where $\{S^{\varepsilon}_t\}_{t\in[0,T]}$ is the process defined in the statement of Theorem \ref{main theorem 3}. Therefore,
\begin{eqnarray*}
Y^{\varepsilon}_t&=&V_{t}^{\varepsilon}\diamond\mathcal{E}_{\varepsilon}(t)^{\diamond -1}\\
&=&\left(S_{t}^{\varepsilon}\cdot\mathcal{E}_{\varepsilon}(t)\right)\diamond \mathcal{E}_{\varepsilon}(t)^{\diamond -1}\\
&=& T_{-K_{\varepsilon}(t,\cdot)}\left(S_{t}^{\varepsilon}\cdot\mathcal{E}_{\varepsilon}(t)\right)\cdot \mathcal{E}_{\varepsilon}(t)^{\diamond -1}\\
&=&T_{-K_{\varepsilon}(t,\cdot)}S_{t}^{\varepsilon}\cdot T_{-K_{\varepsilon}(t,\cdot)}\mathcal{E}_{\varepsilon}(t)\cdot \mathcal{E}_{\varepsilon}(t)^{\diamond -1}\\
&=& T_{-K_{\varepsilon}(t,\cdot)}S_{t}^{\varepsilon}\cdot\exp\left\{-\int_{0}^{T}K_{\varepsilon}(t,s)dB_{s}-
\frac{1}{2}|K_{\varepsilon}(t,\cdot)|^{2}+|K_{\varepsilon}(t,\cdot)|^{2}\right\}\cdot \mathcal{E}_{\varepsilon}(t)^{\diamond -1}\\
&=&T_{-K_{\varepsilon}(t,\cdot)}S_{t}^{\varepsilon}.
\end{eqnarray*}
Here, in the third equality, we utilized Gjessing Lemma. The proof of Theorem \ref{main theorem 3} is complete.

\subsection{Alternative proof}

We are now going to prove a technical result of independent interest that will be used to obtain a different and more direct proof of Theorem \ref{main theorem 3}.

\begin{proposition}\label{îto9}
Let $\{X_t\}_{t\in[0,T]}$ be a stochastic process such that:
\begin{itemize}
\item the function $t\mapsto X_t$ is differentiable
\item the random variable $X_t$ belongs to $\mathcal{L}^p(W,\mu)$ for some $p>1$ and all $t\in [0,T]$.
\end{itemize}
If the function $h:[0,T]^2\rightarrow\mathbb{R}$ is such that
\begin{itemize}
\item for almost all $s\in[0,T]$ the function $t\mapsto h(t,s)$ is continuously differentiable
\item for all $t\in [0,T]$ the functions $h(t,\cdot)$ and $\partial_t h(t,\cdot)$ belong to $L^2([0,T])$
\end{itemize}
then
\begin{eqnarray*}
\frac{d}{dt}(T_{h(t,\cdot)}X_t)&=&T_{h(t,\cdot)}\frac{dX_t}{dt}+T_{h(t,\cdot)}X_t \cdot\int_0^T \partial_th(t,s)dB_s\\
&&-T_{h(t,\cdot)}X_t\diamond\int_0^T\partial_th(t,s)dB_s.
\end{eqnarray*}
\end{proposition}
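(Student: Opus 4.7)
My plan is to verify the identity first on the total set of stochastic exponentials by a direct differentiation, then extend by linearity and density. Assume therefore that $X_t=\mathcal{E}(f_t)$ with $t\mapsto f_t$ a $C^1$ curve in $L^2([0,T])$. Using the defining action of the translation on exponentials,
$$T_{h(t,\cdot)}X_t=\mathcal{E}(f_t)\exp\{\langle f_t,h(t,\cdot)\rangle\}=\exp\Big\{\delta(f_t)-\tfrac{1}{2}|f_t|^2+\langle f_t,h(t,\cdot)\rangle\Big\}.$$
Differentiating this explicit expression in $t$ by the chain rule gives
$$\frac{d}{dt}T_{h(t,\cdot)}X_t=T_{h(t,\cdot)}X_t\cdot\big[\delta(f_t')-\langle f_t,f_t'\rangle+\langle f_t',h(t,\cdot)\rangle+\langle f_t,\partial_t h(t,\cdot)\rangle\big].$$

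Next I would apply $T_{h(t,\cdot)}$ directly to $\frac{dX_t}{dt}=\mathcal{E}(f_t)\cdot[\delta(f_t')-\langle f_t,f_t'\rangle]$. Since $T_h$ is an algebra morphism acting on Wiener integrals by $T_h\delta(g)=\delta(g)+\langle g,h\rangle$, this yields
$$T_{h(t,\cdot)}\frac{dX_t}{dt}=T_{h(t,\cdot)}X_t\cdot\big[\delta(f_t')+\langle f_t',h(t,\cdot)\rangle-\langle f_t,f_t'\rangle\big],$$
so the discrepancy between the two preceding displays is precisely $T_{h(t,\cdot)}X_t\cdot\langle f_t,\partial_t h(t,\cdot)\rangle$. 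To identify this residual as the stated Wick correction, I would invoke the standard Malliavin--Wick identity
$$F\cdot\delta(g)-F\diamond\delta(g)=\int_0^T D_sF\cdot g(s)\,ds,\qquad F\in\mathbb{D}^{1,2},\ g\in L^2([0,T]),$$
applied with $F=T_{h(t,\cdot)}\mathcal{E}(f_t)$ and $g=\partial_t h(t,\cdot)$. Since $D_sF=f_t(s)F$ for this choice of $F$, the right-hand side becomes exactly $\langle f_t,\partial_t h(t,\cdot)\rangle\,T_{h(t,\cdot)}X_t$, and the desired formula falls out for $X_t=\mathcal{E}(f_t)$.

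The final step is to extend by linearity to finite linear combinations of stochastic exponentials, which are dense in $\mathcal{L}^p(W,\mu)$, and close the identity using the $\mathcal{L}^q\to\mathcal{L}^p$ continuity of $T_{h(t,\cdot)}$ (Theorem 14.1 in Janson) together with the unbounded-bilinear-form extension of the Wick product already employed in the paper. The main obstacle will be precisely this density step: the middle identity requires $F\in\mathbb{D}^{1,2}$, whereas the hypothesis only asks $X_t\in\mathcal{L}^p$. The cleanest way to handle this is to interpret the formula itself as the defining relation for $T_{h(t,\cdot)}X_t\diamond\delta(\partial_t h(t,\cdot))$ in the extended sense; with this reading both sides remain well defined under the stated $\mathcal{L}^p$-smoothness of $t\mapsto X_t$, and the $t$-differentiation commutes with the approximation by finite combinations of exponentials thanks to the uniform control of the weights $\exp\{\langle f_t,h(t,\cdot)\rangle\}$ provided by the $L^2$-boundedness of $h(t,\cdot)$ and $\partial_t h(t,\cdot)$.
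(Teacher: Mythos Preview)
Your computation on stochastic exponentials is correct and the strategy is sound, but the route differs from the paper's. The paper avoids any density/closure argument by applying Gjessing's lemma directly to a general $X_t$ satisfying the hypotheses, writing
\[
T_{h(t,\cdot)}X_t=\bigl(X_t\cdot\mathcal{E}(h(t,\cdot))\bigr)\diamond\mathcal{E}(-h(t,\cdot)),
\]
and then differentiating this Wick-product representation in $t$ via the Leibniz rule for $\diamond$. The calculation reduces to explicit identities for $\frac{d}{dt}\mathcal{E}(\pm h(t,\cdot))$ and the action of $T_{h(t,\cdot)}$ on $\delta(\partial_t h(t,\cdot))$, and the three terms in the statement drop out directly. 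This buys the paper a proof that works under the stated $\mathcal{L}^p$ hypothesis without approximation, whereas your route trades the Wick chain rule for a concrete verification on exponentials but must then close by density---and, as you yourself flag, that closure is the delicate point: the Malliavin--Wick identity $F\cdot\delta(g)-F\diamond\delta(g)=\langle DF,g\rangle$ asks for $F\in\mathbb{D}^{1,2}$, and the approximating combinations must converge in a $C^1$-in-$t$ sense (not just statically in $\mathcal{L}^p$) for the $t$-derivative to pass through. Your approach makes the origin of the Wick correction term more transparent; the paper's is cleaner under the minimal hypotheses and sidesteps the regularity issue entirely by leaning on the Gjessing representation.
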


\begin{proof}
To simplify the notation we set
\begin{eqnarray*}
\delta(h(t,\cdot)):=\int_0^Th(t,s)dB_s\quad\mbox{ and }\quad\delta(\partial_th(t,\cdot)):=\int_0^T\partial_th(t,s)dB_s.
\end{eqnarray*}
According to Gjessing Lemma we know that
\begin{eqnarray*}
T_{h(t,\cdot)}X_t\diamond \mathcal{E}(h(t,\cdot))=X_t\cdot\mathcal{E}(h(t,\cdot))
\end{eqnarray*}
or equivalently,
\begin{eqnarray}\label{Gjessing}
T_{h(t,\cdot)}X_t=\left(X_t\cdot\mathcal{E}(h(t,\cdot))\right)\diamond\mathcal{E}(-h(t,\cdot)).
\end{eqnarray}
We now use the chain rule for the Wick product to get
\begin{eqnarray*}
\frac{d}{dt}(T_{h(t,\cdot)}X_t)&=&\frac{d}{dt}\left(X_t\cdot\mathcal{E}(h(t,\cdot))\right)
\diamond\mathcal{E}(-h(t,\cdot))+(X_t\cdot\mathcal{E}(h(t,\cdot)))\diamond\frac{d}{dt}\mathcal{E}(-h(t,\cdot))\\
&=&\left(\frac{dX_t}{dt}\cdot\mathcal{E}(h(t,\cdot))\right)\diamond \mathcal{E}(-h(t,\cdot))+\left(X_t\cdot\frac{d}{dt}\mathcal{E}(h(t,\cdot))\right)\diamond\mathcal{E}(-h(t,\cdot))\\
&&+\left(X_t\cdot\mathcal{E}(h(t,\cdot))\right)\diamond\frac{d}{dt}\mathcal{E}(-h(t,\cdot))\\
&=&\left(\frac{dX_t}{dt}\cdot\mathcal{E}(h(t,\cdot))\right)\diamond\mathcal{E}(-h(t,\cdot))\\
&&+\left(X_t\cdot\mathcal{E}(h(t,\cdot))\cdot\frac{d}{dt}\left(\delta(h(t,\cdot))-\frac{1}{2}|h(t,\cdot)|^2\right)\right)
\diamond\mathcal{E}(-h(t,\cdot))\\
&&+\left(X_t\cdot\mathcal{E}(h(t,\cdot))\right)\diamond\mathcal{E}(-h(t,\cdot))
\diamond\frac{d}{dt}\delta(-h(t,\cdot))\\
\end{eqnarray*}
Observe that according to identity (\ref{Gjessing}) we can write
\begin{eqnarray*}
\left(\frac{dX_t}{dt}\cdot\mathcal{E}(h(t,\cdot))\right)\diamond\mathcal{E}(-h(t,\cdot))=T_{h(t,\cdot)}\frac{dX_t}{dt}.
\end{eqnarray*}
Therefore, the last chain of equalities becomes
\begin{eqnarray*}
\frac{d}{dt}(T_{h(t,\cdot)}X_t)&=&T_{h(t,\cdot)}\frac{dX_t}{dt}\\
&&+\left(X_t\cdot\mathcal{E}(h(t,\cdot))
\cdot(\delta(\partial_th(t,\cdot))-\langle h(t,\cdot),\partial_t h(t,\cdot)\rangle)\right)
\diamond\mathcal{E}(-h(t,\cdot))\\
&&-T_{h(t,\cdot)}X_t\diamond\delta(\partial_th(t,\cdot))\\
&=&T_{h(t,\cdot)}\frac{dX_t}{dt}+T_{h(t,\cdot)}\left(X_t\cdot(\delta(\partial_th(t,\cdot))-\langle h(t,\cdot),\partial_t h(t,\cdot)\rangle)\right)\\
&&-T_{h(t,\cdot)}X_t\diamond\delta(\partial_th(t,\cdot))\\
&=&T_{h(t,\cdot)}\frac{dX_t}{dt}+T_{h(t,\cdot)}X_t\cdot T_{h(t,\cdot)}(\delta(\partial_th(t,\cdot))-\langle h(t,\cdot),\partial_t h(t,\cdot)\rangle)\\
&&-T_{h(t,\cdot)}X_t\diamond\delta(\partial_th(t,\cdot))\\
&=&T_{h(t,\cdot)}\frac{dX_t}{dt}+T_{h(t,\cdot)}X_t\cdot\delta(\partial_th(t,\cdot))
-T_{h(t,\cdot)}X_t\diamond\delta(\partial_th(t,\cdot)).
\end{eqnarray*}
The proof is complete.
\end{proof}

By means of Proposition \ref{îto9}, we are now able to prove identity (\ref{ito}) from Theorem \ref{main theorem 3} via a direct verification. More precisely, let $\{S_{t}^{\varepsilon}\}_{t\in[0,T]}$ be the process in the statement of Theorem \ref{main theorem 3}. Then, using equation (\ref{ito-stra SDE}) we get
\begin{eqnarray*}
\frac{d}{dt}T_{-K_\varepsilon(t,\cdot)}S_{t}^{\varepsilon}&=& T_{-K_\varepsilon(t,\cdot)}\frac{dS_{t}^{\varepsilon}}{dt}-T_{-K_\varepsilon(t,\cdot)}S_{t}^{\varepsilon}
\cdot\int_{0}^{T}\partial_tK_\varepsilon(t,s)dB_s\\
&&+T_{-K_\varepsilon(t,\cdot)}S_{t}^{\varepsilon}\diamond\int_{0}^{T}\partial_tK_\varepsilon(t,s)dB_s\\
&=&T_{-K_\varepsilon(t,\cdot)}\left(b(S_{t}^{\varepsilon})+\frac{1}{2}\frac{d}{dt}|K_{\varepsilon}(t,\cdot)|^{2} \cdot S_{t}^{\varepsilon}+S_{t}^{\varepsilon}\cdot\frac{dB_{t}^{\varepsilon}}{dt}\right)\\
&&-T_{-K_\varepsilon(t,\cdot)}S_{t}^{\varepsilon}\cdot\frac{dB_{t}^{\varepsilon}}{dt}
+T_{-K_\varepsilon(t,\cdot)}S_{t}^{\varepsilon}\diamond\frac{dB_{t}^{\varepsilon}}{dt}\\
&=&b\left(T_{-K_\varepsilon(t,\cdot)}S_{t}^{\varepsilon}\right)+
\frac{1}{2}\frac{d}{dt}|K_{\varepsilon}(t,\cdot)|^{2}\cdot T_{-K_\varepsilon(t,\cdot)}S_{t}^{\varepsilon}\\
&&+T_{-K_\varepsilon(t,\cdot)}S_{t}^{\varepsilon}\cdot\left(\frac{dB_{t}^{\varepsilon}}{dt}
-\int_0^T\partial_tK_{\varepsilon}(t,s) K_{\varepsilon}(t,s)ds\right)\\ &&-T_{-K_\varepsilon(t,\cdot)}S_{t}^{\varepsilon}\cdot\frac{dB_{t}^{\varepsilon}}{dt}+T_{-K_\varepsilon(t,\cdot)}S_{t}^{\varepsilon}\diamond\frac{dB_{t}^{\varepsilon}}{dt}\\
&=&b\left(T_{-K_\varepsilon(t,\cdot)}S_{t}^{\varepsilon}\right)+T_{-K_\varepsilon(t,\cdot)}S_{t}^{\varepsilon}\diamond\frac{dB_{t}^{\varepsilon}}{dt}.
\end{eqnarray*}
This is imply that $\{T_{-K_\varepsilon(t,\cdot)}S_{t}^{\varepsilon}\}_{t\in [0,T]}$ solves equation (\ref{approx ito}).

\end{document}